\theoremstyle{plain}
  \newtheorem{thm}{Theorem}[section]
  \newtheorem{lem}[thm]{Lemma}
  \newtheorem{cor}[thm]{Corollary}
  \newtheorem{prop}[thm]{Proposition}
  \newtheorem{conj}[thm]{Conjecture}
  \newtheorem*{thm*}{Theorem}
  \newtheorem*{obs*}{Observation}
\theoremstyle{definition}
  \newtheorem{defn}[thm]{Definition}
\theoremstyle{remark}
  \newtheorem{rem}[thm]{Remark}
\newcommand{\Z}{\mathbb{Z}}
\newcommand{\C}{\mathbb{C}}
\newcommand{\R}{\mathbb{R}}
\newcommand{\Vol}{\operatorname{Vol}}
\newcommand{\CS}{\operatorname{CS}}
\newcommand{\Li}{\operatorname{Li}}
\newcommand{\Hom}{\operatorname{Hom}}
\newcommand{\cs}{\operatorname{cs}}
\newcommand{\SL}{\mathrm{SL}}
\newcommand{\PSL}{\mathrm{PSL}}
\newcommand{\arccosh}{\operatorname{arccosh}}
\newcommand{\Int}{\operatorname{Int}}
\newcommand{\floor}[1]{\lfloor#1\rfloor}
\newcommand{\vE}{v_{E}}
\newcommand{\FigEight}{\mathcal{E}}
\newcommand{\Cable}{\mathcal{C}}
\newcommand{\FigEightCable}{\mathcal{F}}
\renewcommand{\i}{\sqrt{-1}}
\newcommand{\pic}[2]{\raisebox{-0.5\height}{\includegraphics[scale=#1]{#2.eps}}}
\numberwithin{equation}{section}
\begin{document}
\title[The colored Jones polynomial of a cable of the figure-eight knot]
{The colored Jones polynomial of a cable of the figure-eight knot}
\author{Hitoshi Murakami}
\address{
Graduate School of Information Sciences,
Tohoku University,
Aramaki-aza-Aoba 6-3-09, Aoba-ku,
Sendai 980-8579, Japan}
\email{starshea@tky3.3web.ne.jp}
\author{Anh T.~Tran}
\address{
Department of Mathematical Sciences, The University of Texas at Dallas, Richardson,
TX 75080, USA}
\email{att140830@utdallas.edu}
\date{\today}
\begin{abstract}
We study the asymptotic behavior of the $N$-dimensional colored Jones polynomial of a cable of the figure-eight knot, evaluated at $\exp(\xi/N)$ for a real number $\xi$.
We show that if $\xi$ is sufficiently large, the colored Jones polynomial grows exponentially when $N$ goes to the infinity.
Moreover the growth rate is related to the Chern--Simons invariant of the knot exterior associated with an $\SL(2;\R)$ representation.
\end{abstract}
\keywords{colored Jones polynomial, figure-eight knot, volume conjecture, Chern--Simons invariant, Reidemeister torsion}
\subjclass{Primary 57M27 57M25 57M50}
\dedicatory{Dedicated to the memory of Vaughan Jones (1952--2020)}
\thanks{The authors are supported by Grant-in-Aid for Challenging Exploratory Research (21654053).
The first author is supported by JSPS KAKENHI Grant Number 20K03601.
The second author is supported by a grant from the Simons Foundation (\#708778).}
\maketitle
%%%%%%%%%%%%%%%%%%%%%%%%%%%%%%%%%%%%%%%%%%%%%%%%%%%%%%%%%%%%%%%%%%%%%%
\section{Introduction}
For a knot $K$ in the three-sphere $S^3$, let $J_N(K;q)$ be the $N$-dimensional colored Jones polynomial \cite{Jones:BULAM31985,Kirillov/Reshetikhin:1989}.
Here we normalize $J_N(K;q)$ so that $J_N(U;q)=1$ for the unknot $U$, and that when $N=2$, it satisfies the following skein relation:
\begin{equation*}
  qJ_2\left(\raisebox{1mm}{\pic{0.1}{positive_crossing}};q\right)
  -
  q^{-1}J_2\left(\raisebox{1mm}{\pic{0.1}{negative_crossing}};q\right)
  =
  \left(q^{1/2}-q^{-1/2}\right)q_2\left(\raisebox{1mm}{\pic{0.1}{null_crossing}};q\right).
\end{equation*}
Note that it is different from that of Jones' original paper \cite{Jones:BULAM31985}.
\par
R.~Kashaev \cite{Kashaev:LETMP97} proposed a conjecture stating that his invariant parametrized by an integer $N\ge2$ introduced in \cite{Kashaev:MODPLA95} grows exponentially with growth rate proportional to the hyperbolic volume of the knot complement for any hyperbolic knot.
J.~Murakami and the first author \cite{Murakami/Murakami:ACTAM12001} proved that Kashaev's invariant coincides with $J_N(K;\exp(2\pi\i/N))$ and generalized Kashaev's conjecture for general knots.
\begin{conj}[Volume Conjecture]
Let $K$ be a knot.
Then we have
\begin{equation}\label{eq:VC}
  \lim_{N\to\infty}\frac{\log|J_N(K;\exp(2\pi\i/N))|}{N}
  =
  \frac{\Vol(S^3\setminus{K})}{2\pi},
\end{equation}
where $\Vol$ is $v_3$ times the simplicial volume.
Here $v_3$ is the volume of the ideal, hyperbolic, regular tetrahedron.
\end{conj}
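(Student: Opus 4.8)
The plan is to prove the conjecture for an arbitrary knot by reducing it along the JSJ decomposition to the hyperbolic case, which I would then settle by a saddle-point analysis of an explicit state sum. Gromov's simplicial volume is additive over the geometric decomposition of $S^3\setminus K$ and vanishes on Seifert-fibered pieces, so the right-hand side of \eqref{eq:VC} equals $\tfrac{1}{2\pi}\sum_i\Vol(M_i)$, the sum running over the hyperbolic JSJ pieces $M_i$. Accordingly I would aim to establish an asymptotic splitting of $J_N(K;\exp(2\pi\i/N))$ compatible with this topological decomposition: using the cabling/satellite formula, which expresses the colored Jones polynomial of a knot built from a pattern and a companion in terms of the companion's invariants at shifted colors, I would argue that Seifert-fibered pieces contribute only polynomially in $N$ (as is already known for torus knots by the work of Kashaev and Tirkkonen), while each hyperbolic piece contributes an exponential factor whose rate is its volume.

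For the core hyperbolic case, I would fix an ideal triangulation of $S^3\setminus K$ with tetrahedra $T_1,\dots,T_n$ and write the Kashaev invariant $\langle K\rangle_N=J_N(K;\exp(2\pi\i/N))$ as a finite state sum whose summand is a product of quantum dilogarithms, one per tetrahedron. The crucial analytic input is that at $q=\exp(2\pi\i/N)$ the quantum dilogarithm degenerates, as $N\to\infty$, to the classical Euler dilogarithm $\Li_2$; replacing the sum by an integral then yields an integrand of the shape $\exp\bigl(N\,\Phi(\mathbf z)+o(N)\bigr)$, where the potential $\Phi$ is a signed sum of $\Li_2$-terms together with linear corrections recording how the tetrahedra are glued.

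The next step is steepest descent. I would verify that the critical-point equations $\partial\Phi/\partial z_j=0$ coincide with Thurston's gluing equations in Neumann--Zagier form, so that the shape parameters of the complete hyperbolic structure furnish a distinguished critical point $\mathbf z_{\mathrm{geom}}$. Evaluating $\Phi$ there and invoking the Neumann--Zagier identity relating the dilogarithm sum to the complexified volume functional, one identifies $\Re\,\Phi(\mathbf z_{\mathrm{geom}})=\tfrac{1}{2\pi}\Vol(S^3\setminus K)$ and $\Im\,\Phi(\mathbf z_{\mathrm{geom}})$ with a multiple of $\CS(S^3\setminus K)$, producing the asserted growth rate; the subleading prefactor is expected to be governed by the Reidemeister torsion, and its control is part of pinning down the $o(N)$ term.

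The hard part, and the main obstacle to closing the argument in full generality, will be the rigorous justification at every analytic junction. One must: (i) choose branches of the multivalued potential and deform the contour of the sum-to-integral approximation onto a steepest-descent cycle through $\mathbf z_{\mathrm{geom}}$ while staying clear of the quantum-dilogarithm poles; (ii) prove that $\mathbf z_{\mathrm{geom}}$ strictly dominates every other critical point and all boundary contributions, a positivity statement that is sensitive to the triangulation and unknown in general; and (iii) make the JSJ splitting of the asymptotics precise, so that the per-piece estimates genuinely assemble across the gluing tori and the Seifert contributions remain subexponential even after gluing. Step (ii) is the principal obstacle: dominance of the geometric saddle lies essentially as deep as the conjecture itself, and it is precisely this positivity that one can currently verify only for restricted families, such as the figure-eight knot, whose cables are the subject of the present paper.
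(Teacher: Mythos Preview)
The statement you are attempting to prove is a \emph{conjecture}, not a theorem, and the paper does not prove it: it is stated in the introduction as the Volume Conjecture and then the paper lists the handful of cases in which it is known (torus knots, the figure-eight knot, $5_2$, $6_1$, $6_2$, $6_3$, certain Whitehead doubles, cables of the figure-eight). There is therefore no ``paper's own proof'' to compare against; your proposal is an outline of a strategy for an open problem.

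As a strategy your outline is the standard heuristic picture, and you are candid that the analytic steps (i)--(iii) are not established. But you should recognize that these are not mere technicalities to be filled in: each one is genuinely open in general. In particular, the ``asymptotic splitting of $J_N(K;\exp(2\pi\sqrt{-1}/N))$ compatible with the JSJ decomposition'' that you propose to use is itself conjectural. The cabling/satellite formula expresses $J_N$ of a satellite as a \emph{sum} over colors of the companion's invariants, not a product over JSJ pieces, and showing that this sum localizes so that each piece contributes its own exponential factor is exactly the kind of saddle-dominance statement you flag in (ii). Likewise, the claim that Seifert pieces contribute only subexponentially \emph{after gluing} does not follow from the Kashaev--Tirkkonen computation for stand-alone torus knots. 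Your final sentence is the honest summary: step (ii) is ``essentially as deep as the conjecture itself,'' which means the proposal is a reformulation of the problem rather than a proof.
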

Note that when $K$ is hyperbolic, that is, $S^3\setminus{K}$ possesses a unique complete hyperbolic structure with finite volume, then $\Vol$ is just the hyperbolic volume.
So the volume conjecture is a generalization of Kashaev's conjecture.
\par
It is well known that the knot complement $S^3\setminus{K}$ can be decomposed into hyperbolic pieces and Seifert fibered pieces by a system of tori (Jaco--Shalen--Johannson decomposition \cite{Jaco/Shalen:MEMAM1979,Johannson:1979}) and that $\Vol(S^3\setminus{K})$ is the sum of the hyperbolic volumes of the hyperbolic pieces.
Hence if there are no hyperbolic pieces, then $\Vol(S^3\setminus{K})=0$.
In fact, Kashaev and O.~Tirkkonen \cite{Kashaev/Tirkkonen:ZAPNS2000} proved that when $K$ is a torus knot, the limit in \eqref{eq:VC} is zero, proving the volume conjecture.
\par
For hyperbolic knots, T.~Ekholm showed that the volume conjecture is true for the figure-eight knot $4_1$.
The conjecture is also proved for $5_2$ by T.~Ohtsuki \cite{Ohtsuki:QT2016}, and for $6_1,6_2,6_3$ by Ohtsuki and Y.~Yokota \cite{Ohtsuki/Yokota:MATPC2018}.
\par
For non-hyperbolic knots with non-zero volumes, H.~Zheng \cite{Zheng:CHIAM22007} proved the volume conjecture in the case of Whitehead doubles of the torus knot of type $(2,2b+1)$, and L{\^e} and the second author proved it for cables of the figure-eight knot \cite{Le/Tran:JKNOT2010}.
\par
What happens when we replace $2\pi\i$ in $J_N\left(K;e^{2\pi\i/N}\right)$ with another complex number $\eta$?
\par
In the case of the figure-eight knot $E$, the following results about the asymptotic behavior of $J_N\left(E;e^{\eta/N}\right)$ ($N\to\infty$) are known so far.
\begin{enumerate}
\item
$|2\pi\i-\eta|$ is small enough.
If $\eta$ is not purely imaginary, then the limit
\begin{equation*}
  \lim_{N\to\infty}
  \frac{\log J_N(E;e^{\eta/N})}{N}
\end{equation*}
exists and it determines the volume and the Chern--Simons invariant of the three-manifold obtained by generalized Dehn surgery corresponding to $\eta-2\pi\i$ \cite{Murakami/Yokota:JREIA2007} (see also \cite{Murakami:JTOP2013}).
\item
$\eta$ is purely imaginary.
If $2\pi/|\eta|$ is irrational, its irrationality measure is finite, and $5\pi/3<|\eta|<7\pi/3$, then
\begin{equation*}
  \lim_{N\to\infty}\frac{\log\left|J_N\left(E;e^{\eta/N}\right)\right|}{N}
  =
  \frac{\Vol\left(E_{|\eta-2\pi\i|}\right)}{\eta},
\end{equation*}
where $E_{\theta}$ is the cone-manifold along $E$ with cone-angle $\theta$ \cite[Theorem~1.2]{Murakami:KYUMJ2004} (see \cite{Murakami:KYUMJ2016} for a correction).
\item
$\eta$ is real.
If $\eta>2\kappa$ with $\kappa:=\arccosh(3/2)/2=\log\bigl((3+\sqrt{5})/2\bigr)/2$, then
\begin{equation}\label{eq:fig8_kappa}
  \lim_{N\to\infty}\frac{\log\left|J_N\left(E;e^{\eta/N}\right)\right|}{N}
  =
  \frac{1}{\eta}S(\eta/2),
\end{equation}
where
\begin{equation*}
  S(\xi)
  :=
  \Li_2\left(e^{-\varphi(\xi)-2\xi}\right)
  -
  \Li_2\left(e^{\varphi(\xi)-2\xi}\right)
  +
  2\xi\varphi(\xi)
\end{equation*}
with $\varphi(\xi):=\arccosh(\cosh(2\xi)-1/2)$ and $\Li_2(z):=-\int_{0}^{z}\frac{\log(1-x)}{x}\,dx$.
See \cite[Theorem~8.1]{Murakami:KYUMJ2004} and \cite[Theorem~6.9]{Murakami:Novosibirsk}.
\item
$\eta=2\kappa$.
In this case $J_N\left(E;e^{\eta/N}\right)$ grows polynomially with respect to $N$.
Moreover we have
\begin{equation*}
  J_N\left(E;e^{2\kappa/N}\right)
  \underset{N\to\infty}{\sim}
  \frac{\Gamma(1/3)}{(6\kappa)^{2/3}}N^{2/3},
\end{equation*}
where $\Gamma(x)$ is the gamma function, and $f(N)\underset{N\to\infty}{\sim}g(N)$ means $\lim_{N\to\infty}\frac{f(N)}{g(N)}=1$ (\cite[Theorem~1.1]{Hikami/Murakami:COMCM2008}).
\item
$|\eta|$ is small enough.
Then $\lim_{N\to\infty}\frac{\log\left|J_N\left(E;e^{\eta/N}\right)\right|}{N}=0$.
Moreover we have
\begin{equation*}
  \lim_{N\to\infty}J_N\left(E;e^{\eta/N}\right)
  =
  \frac{1}{\Delta(E;e^{\eta})},
\end{equation*}
where $\Delta(K;t)$ is the Alexander polynomial of a knot $K$ normalized so that $\Delta(K;1)=1$ and $\Delta(K;t^{-1})=\Delta(K;t)$ (\cite{Murakami:JPJGT2007}).
This is also true for any knot \cite[Theorem~1.3]{Garoufalidis/Le:GEOTO2011}.
\end{enumerate}
See \cite{Gukov/Murakami:FIC2008,Dimofte/Gukov:CS,Dimofte/Gukov/Lenells/Zagier:CNTP2010} for more results.
\par
In this paper, we study the colored Jones polynomial of the $(2,2b+1)$-cable of the figure-eight knot (Figure~\ref{fig:fig8_cable}), which we denote by $E^{(2,2b+1)}$.
\begin{figure}[H]
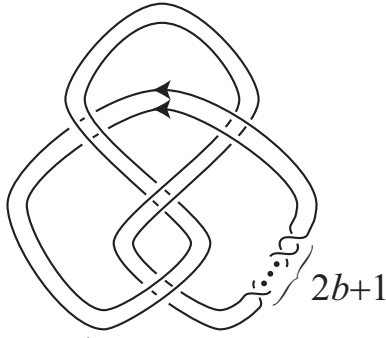

\pic{0.4}{fig8_cable}
\caption{$(2,2b+1)$-cable of the figure-eight knot $E^{(2,2b+1)}$}
\label{fig:fig8_cable}
\end{figure}
We put $\kappa:=\arccosh(3/2)/2$ as before.
We also put $\FigEightCable:=S^3\setminus\Int{N(E^{(2,2b+1)})}$, where $N(E^{(2,2b+1)})$ is a regular neighborhood of $E^{(2,2b+1)}$ in $S^3$ and $\Int$ denotes the interior.
We prove
\begin{thm*}[Theorem~\ref{thm:main}]
Let $\xi$ be a real number with $\xi>\kappa$, then we have
\begin{equation*}
  \xi\lim_{N\to\infty}
  \frac{\log J_{N}\left(E^{(2,2b+1)};e^{\xi/N}\right)}{N}
  =
  S(\xi),
\end{equation*}
where we put
\begin{equation*}
  S(\xi)
  :=
  \Li_2\left(e^{-\varphi(\xi)-2\xi}\right)
  -
  \Li_2\left(e^{\varphi(\xi)-2\xi}\right)
  +2\xi\varphi(\xi)
\end{equation*}
with
\begin{equation*}
  \varphi(\xi)
  :=
  \arccosh\left(\cosh(2\xi)-\frac{1}{2}\right).
\end{equation*}
Moreover, $S(\xi)$ defines the Chern--Simons invariant $\CS_{\FigEightCable}([\rho_{\xi}];\xi,v(\xi))$ of the knot exterior $\FigEightCable$ associated with a representation $\rho_{\xi}\colon\pi_1(\FigEightCable)\to\SL(2;\C)$ and $(\xi,v(\xi))$ {\rm(}see Section~\ref{sec:CS} for the definition{\rm)} in the following way:
\begin{equation*}
  \CS_{\FigEightCable}([\rho_{\xi}];\xi,v(\xi))
  =
  S(\xi)-\frac{\xi v(\xi)}{4}.
\end{equation*}
Here $\rho_{\xi}$ sends the meridian of $\partial\FigEightCable$ to $\begin{pmatrix}e^{\xi/2}&0\\0&e^{-\xi/2}\end{pmatrix}$ and the longitude to $\begin{pmatrix}e^{v(\xi)/2}&0\\0&e^{-v(\xi)/2}\end{pmatrix}$ up to conjugation.
\end{thm*}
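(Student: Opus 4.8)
The plan is to start from the cabling formula for the colored Jones polynomial, reduce the computation to the asymptotics of a finite sum built from the colored Jones polynomial of the figure-eight knot $E$, extract the exponential growth rate by the saddle point method, and then match this growth rate with the Chern--Simons invariant via the set-up of Section~\ref{sec:CS}. First I would write down the cabling formula
\begin{equation*}
  J_{N}\bigl(E^{(2,2b+1)};q\bigr)
  =
  \sum_{k}c^{(b)}_{N,k}(q)\,J_{k}(E;q),
\end{equation*}
where $k$ runs over the odd integers from $1$ to $2N-1$ (the colours appearing when two parallel $N$-coloured strands are decomposed) and each $c^{(b)}_{N,k}(q)$ is an explicit product of quantum integers times a framing term $(\pm1)\,q^{(2b+1)(k^{2}-1)/8}$, corrected by the global $0$-framing factor coming from the self-linking of the cable. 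Substituting Habiro's cyclotomic expansion of $J_{k}(E;q)$ turns the right-hand side into a finite double sum, and putting $q=e^{\xi/N}$ writes each summand as $\exp\bigl(N\,\Phi_{N}(k/N,\ell/N)\bigr)$ up to factors polynomial in $N$, where $\ell$ is the cyclotomic index and $\Phi_{N}\to\Phi$ uniformly on compacta. For orientation: the top colour $k=2N-1$, after setting $\eta=2\xi$, is governed by equation~\eqref{eq:fig8_kappa} applied with colour $2N$ at $e^{\eta/(2N)}$, which is legitimate precisely because $\eta=2\xi>2\kappa$, i.e.\ $\xi>\kappa$, and it contributes growth rate $\tfrac{1}{\xi}S(\xi)$ per $N$.

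The heart of the argument is the asymptotic analysis of this sum. I would identify the critical point $(t_{0},\ell_{0})$ of $\Phi$: the framing terms produce a quadratic piece, the products of quantum integers produce dilogarithmic pieces through the Riemann-sum asymptotics of $\sum\log\bigl(1-e^{\bullet}\bigr)$, and the relation $\cosh\varphi=\cosh 2\xi-\tfrac{1}{2}$ defining $\varphi(\xi)$ emerges as the saddle point equation. The work is then to show (i) that $\Re\Phi$ attains its maximum over the entire range of summation at this critical point, so that no boundary term and no competing critical point dominates; (ii) that the critical point is nondegenerate; and (iii) that discarding the off-critical part of the sum and passing from sums to integrals (Poisson summation followed by a contour shift, or Laplace's method, as in \cite{Murakami:KYUMJ2004,Hikami/Murakami:COMCM2008}) changes the answer only by a factor polynomial in $N$. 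Evaluating $\Phi$ at $(t_{0},\ell_{0})$ then gives $S(\xi)/\xi$, and the dependence on $b$ drops out because the $0$-framing normalisation exactly kills the framing term of the dominant contribution; this proves the displayed limit.

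For the Chern--Simons statement I would construct the representation $\rho_{\xi}\colon\pi_{1}(\FigEightCable)\to\SL(2;\C)$ directly: on the subgroup coming from the figure-eight knot exterior it is conjugate to the $\SL(2;\R)$ representation underlying the critical point, the Seifert fibred structure of the cable space forces the extension over the remainder of $\pi_{1}(\FigEightCable)$, and the meridian and longitude of $\partial\FigEightCable$ are sent to $\operatorname{diag}\bigl(e^{\xi/2},e^{-\xi/2}\bigr)$ and $\operatorname{diag}\bigl(e^{v(\xi)/2},e^{-v(\xi)/2}\bigr)$, with $v(\xi)$ read off from the derivative of the potential at the critical point. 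Feeding the critical value into the Chern--Simons formula established in Section~\ref{sec:CS}, which writes $\CS_{\FigEightCable}([\rho];\xi,v)$ as a critical value of the potential minus the correction $\xi v/4$, then yields $\CS_{\FigEightCable}([\rho_{\xi}];\xi,v(\xi))=S(\xi)-\xi v(\xi)/4$.

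I expect the main obstacle to be step~(i) of the asymptotic analysis: showing that the maximum of $\Re\Phi$ over the full range of the summation variables is attained exactly at the expected critical point, ruling out contributions from small colours $k$ (where \eqref{eq:fig8_kappa} no longer applies) and from the boundary of the range, and doing this uniformly in $b$. A secondary difficulty is the bookkeeping needed to identify the critical value with $S(\xi)$ exactly rather than up to an additive constant, together with the careful treatment of the borderline regime $\xi\downarrow\kappa$, where the saddle point degenerates (note that $\varphi(\kappa)=0$ and $S(\kappa)=0$).
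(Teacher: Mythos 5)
Your overall route coincides with the paper's: the cabling formula plus Habiro's expansion for $J_k(E;q)$ gives exactly the double sum the paper derives in Proposition~\ref{prop:cJ_t}, the dominant contribution comes from the top colour $d=N-1$ together with $l\approx\varphi(\xi)N/\xi$, and the $b$-dependence cancels against the framing normalisation. But the analytic core is left unresolved and is partly misdirected. Since $\xi$ is real and positive, every summand $f_{d,l}$ is a positive real number, so no Poisson summation, contour shift, or nondegenerate-saddle analysis is needed: the paper uses only the monotonicity $f_{d,l}>f_{d-1,l}$, a ratio test in $l$ showing the peak sits at $l=\floor{\varphi(\xi)N/\xi}-1$ (this is precisely where $\xi>\kappa$ enters), the trivial bound $S<N^2 f_{N-1,\floor{\varphi(\xi)N/\xi}-1}$, and a Riemann-sum computation producing the dilogarithms. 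The genuinely delicate point, which your plan never mentions, is the alternating sign $(-1)^d$ in the colour sum: a largest-term lower bound is not automatic for an alternating sum, and the paper's Lemma~\ref{lem:S} supplies it by pairing consecutive values of $d$ and using $f_{N-1,l}/f_{N-2,l}>e^{\xi/2}$ to get $S>(1-e^{-\xi/2})\sum_{l}f_{N-1,l}$. Your step~(i), which you yourself flag as the main obstacle, is exactly where this argument must go, and you give none; note also that the maximum in the colour direction is attained at the boundary $d=N-1$ (consistent with your orientation remark but in tension with your interior-critical-point framing), so the issue is not ``ruling out boundary terms'' but proving non-cancellation near that boundary.

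The Chern--Simons half of your proposal is circular as written. Section~\ref{sec:CS} only \emph{defines} $\CS_M$ via the $\C^{\times}$-bundle $E(\partial M)$ and the Kirk--Klassen theorem; there is no pre-established formula there expressing $\CS_{\FigEightCable}$ as ``critical value of the potential minus $\xi v/4$'' into which one can feed $S(\xi)$ --- that identity is the content of the second half of the theorem. What is actually required is: construct $\rho_u$ by extending Riley's representation of $\pi_1(\FigEight)$ over the cable space $\Cable$ so that its restriction to $\pi_1(\Cable)$ is abelian (a choice, not something forced by the Seifert-fibred structure), compute the longitude eigenvalue $\ell(u)^2$, i.e.\ $v(u)=4\log\ell(u)$; integrate the Kirk--Klassen variation along an explicit path of representations starting from abelian ones, pinning down the constant via $\ell(\kappa)=1$ (so $w(0)=1$), which yields $\CS_{\FigEightCable}([\rho_u];u,v(u))=2\int_{\kappa}^{u}\log\ell(s)\,ds-u\log\ell(u)$; and finally prove the nontrivial identity $S(u)=2\int_{\kappa}^{u}\log\ell(s)\,ds$ by differentiating both sides (using $\cosh\varphi(u)=\cosh(2u)-\tfrac{1}{2}$) and checking $S(\kappa)=0$. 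None of these steps appears in your proposal, so the claimed equality $\CS_{\FigEightCable}([\rho_{\xi}];\xi,v(\xi))=S(\xi)-\xi v(\xi)/4$ is asserted rather than proved.
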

Compare this with the following, which was proved in \cite[Theorem~8.1]{Murakami:KYUMJ2004} and \cite[Theorem~6.9]{Murakami:Novosibirsk} (see \eqref{eq:fig8_kappa}).
We put $\FigEight=S^3\setminus\Int{N(E)}$.
\begin{thm*}[Theorem~\ref{thm:fig8}]
Let $\eta$ be a real number with $\eta>2\kappa$, then we have
\begin{equation*}
  \eta\lim_{N\to\infty}\frac{J_N(E;e^{\eta/N})}{N}
  =
  S(\eta/2).
\end{equation*}
Moreover we have
\begin{equation*}
  \CS_{\FigEight}([\sigma_{\eta}];\eta,\vE(\eta))
  =
  S(\eta/2)-\frac{\eta\vE(\eta)}{4},
\end{equation*}
where $\sigma_{\eta}$ sends the meridian of $\partial\FigEight$ to $\begin{pmatrix}e^{\eta/2}&0\\0&e^{-\eta/2}\end{pmatrix}$ and the longitude to $\begin{pmatrix}e^{\vE(\eta)/2}&0\\0&e^{-\vE(\eta)/2}\end{pmatrix}$ up to conjugation.
\end{thm*}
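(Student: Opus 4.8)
The plan is to extract the exponential growth rate from Habiro's cyclotomic formula by a one–variable Laplace argument, and then to identify the resulting dilogarithm value with the Chern--Simons invariant via the variational formula underlying the definition in Section~\ref{sec:CS}. Recall the formula $J_{N}(E;q)=\sum_{n=0}^{N-1}\prod_{j=1}^{n}\{N-j\}\{N+j\}$, where $\{k\}:=q^{k/2}-q^{-k/2}$, and set $q=e^{\eta/N}$ with $\eta>0$ real. Then $\{N\pm j\}=2\sinh\!\bigl(\eta(N\pm j)/(2N)\bigr)>0$ for $1\le j\le N-1$, so every summand is a positive real number; hence $J_{N}(E;e^{\eta/N})$ lies between the largest summand and $N$ times it, and $\log J_{N}(E;e^{\eta/N})=\max_{0\le n\le N-1}\log\prod_{j=1}^{n}\{N-j\}\{N+j\}+O(\log N)$. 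There is no cancellation to handle; the entire problem is to locate the dominant term.

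Writing $n=Nt$ and replacing $\sum_{j=1}^{n}$ by an integral (Euler--Maclaurin, the only subtlety being an integrable logarithmic singularity, which costs $O(\log N)$), one gets $\log\prod_{j=1}^{n}\{N-j\}\{N+j\}=N\Psi(n/N)+O(\log N)$ with
\begin{align*}
  \Psi(t)
  &=\int_{0}^{t}\Bigl[\log\bigl(2\sinh\tfrac{\eta(1-s)}{2}\bigr)+\log\bigl(2\sinh\tfrac{\eta(1+s)}{2}\bigr)\Bigr]\,ds\\
  &=\eta t+\frac{1}{\eta}\Bigl(\Li_{2}\bigl(e^{-\eta(1+t)}\bigr)-\Li_{2}\bigl(e^{-\eta(1-t)}\bigr)\Bigr).
\end{align*}
Here $\Psi'(t)=\log\bigl[2\bigl(\cosh\eta-\cosh(\eta t)\bigr)\bigr]$ and $\Psi''(t)=-\eta\sinh(\eta t)/\bigl(\cosh\eta-\cosh(\eta t)\bigr)<0$ on $(0,1)$, so $\Psi$ is strictly concave there; its maximum is at the unique $t_{0}$ solving $\cosh(\eta t_{0})=\cosh\eta-\tfrac12$. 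Such a $t_{0}$ lies in $(0,1)$ exactly when $\cosh\eta\ge\tfrac32$, i.e.\ $\eta\ge2\kappa$, and it is an \emph{interior} maximum once $\eta>2\kappa$ -- precisely the hypothesis; the boundary value $\eta=2\kappa$, where $t_{0}=0$ and $\Psi(0)=0$, is the polynomially growing case. Since $\eta t_{0}=\arccosh\bigl(\cosh\eta-\tfrac12\bigr)=\varphi(\eta/2)$, substituting $t=t_{0}$ into the closed form of $\Psi$ gives $\eta\Psi(t_{0})=\eta\varphi(\eta/2)+\Li_{2}\bigl(e^{-\varphi(\eta/2)-\eta}\bigr)-\Li_{2}\bigl(e^{\varphi(\eta/2)-\eta}\bigr)=S(\eta/2)$, whence $\eta\lim_{N\to\infty}\log J_{N}(E;e^{\eta/N})/N=S(\eta/2)$. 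Because the sum is positive, the Laplace estimate itself is routine once the concavity above is in hand.

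For the Chern--Simons statement, note that the saddle-point equation $\cosh(\eta t_{0})=\cosh\eta-\tfrac12$ is, under the standard dictionary, the $A$-polynomial condition for a representation $\sigma_{\eta}\colon\pi_{1}(\FigEight)\to\SL(2;\C)$ with meridian holonomy $\mathrm{diag}(e^{\eta/2},e^{-\eta/2})$; for real $\eta>2\kappa$ this lies on the $\SL(2;\R)$-branch, and its longitude holonomy $\mathrm{diag}(e^{\vE(\eta)/2},e^{-\vE(\eta)/2})$ has $\vE(\eta)$ the corresponding real root-value of the $A$-polynomial of $E$, an explicit function of $\eta$ and $\varphi(\eta/2)$. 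With $\CS_{\FigEight}([\sigma_{\eta}];\eta,\vE(\eta))$ as defined in Section~\ref{sec:CS}, I would then: (a) write down $\sigma_{\eta}$ explicitly and compute $\vE(\eta)$; (b) apply the variational formula (of Neumann--Zagier/Kirk--Klassen type) for the change of $\CS$ along the path $\eta'\mapsto\sigma_{\eta'}$ under deformation of the boundary holonomy, reducing the claimed identity to the equality of the $\eta$-derivatives of $\CS_{\FigEight}([\sigma_{\eta}];\eta,\vE(\eta))$ and of $S(\eta/2)-\eta\vE(\eta)/4$, a direct computation from the closed form of $\Psi(t_{0})$ and the $A$-polynomial relation; and (c) fix the integration constant at the branch point $\eta=2\kappa$, where $\varphi(\kappa)=0$ makes $S(\kappa)=0$ and $\CS_{\FigEight}([\sigma_{2\kappa}];2\kappa,\vE(2\kappa))$ can be computed directly. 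This is the real-parameter analogue of the relation between colored Jones asymptotics and the Chern--Simons invariant in \cite{Murakami/Yokota:JREIA2007,Murakami:KYUMJ2004}.

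The analytic part is essentially routine: positivity of the summands removes all oscillation, and the concavity of $\Psi$ makes the Laplace step and the location of the maximum immediate. The real work -- and the main obstacle -- lies in step~(c) and, more broadly, in the Chern--Simons bookkeeping: pinning down the normalization of $\CS$ with boundary data $(\eta,\vE(\eta))$ so that the variational formula carries exactly the coefficient implicit in the term $-\eta\vE(\eta)/4$, identifying $\vE(\eta)$ and its dependence on $\varphi(\eta/2)$ from the $A$-polynomial of the figure-eight knot, and computing the integration constant, which requires a good grip on $\sigma_{\eta}$ and on the character variety of $\FigEight$ near the branch point $\eta=2\kappa$.
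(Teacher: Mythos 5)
Your proposal is correct and essentially the paper's own route: the limit follows from positivity of Habiro's summands plus a maximum-term/Laplace analysis with maximizer $\eta t_{0}=\varphi(\eta/2)$ (the argument of \cite{Murakami:KYUMJ2004}, mirrored in Section~3 here for the cable), and the Chern--Simons identity follows from the Kirk--Klassen variational formula. Your steps (a)--(c) are exactly what the paper carries out in Section~5 and at the end: $\vE(\eta)=2\log\ell(\eta/2)$ with $\ell$ satisfying the $A$-polynomial relation, the derivative matching amounts to the identity $S(u)=2\int_{\kappa}^{u}\log\ell(s)\,ds$, and the integration constant at the branch point $\eta=2\kappa$ is fixed to $0$ by passing to the abelian (diagonal) path, where the longitude holonomy is trivial and $\ell(\kappa)=1$.
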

%%%%%%%%%%%%%%%%%%%%%%%%%%%%%%%%%%%%%%%%%%%%%%%%%%%%%%%%%%%%%%%%%%%%%%
\section{Preliminaries}
In this section, we use linear skein theory (see for example \cite[Chapters~13, 14]{Lickorish:1997}) to calculate the colored Jones polynomial.
For another way to calculate it, see \cite{Le/Tran:JKNOT2010}.
\par
We denote by \pic{0.2}{E} the $(1,1)$-tangle \pic{0.2}{fig8_tangle}, and by \pic{0.2}{E2} the $(2,2)$-tangle \pic{0.2}{fig8_parallel_tangle} that is the two-parallel of \pic{0.2}{E}.
\par
The $N$-dimensional colored Jones polynomial of $E^{(2,2b+1)}$ is given by
\begin{equation*}
  \left.
    \frac{1}{\Delta_{N-1}}
    \left((-1)^{N-1}A^{(N-1)^2+2(N-1)}\right)^{-(2b+1)}
    \left\langle\pic{0.2}{E2_2b+1}\right\rangle
  \right|_{A:=t^{1/4}},
\end{equation*}
where the number $N-1$ beside a line indicates that we put the $(N-1)$-th Jones--Wenzl indempotent \cite{Jones:INVEM1983,Wenzl:CRMAR1987} along the line, $\langle D\rangle$ is the Kauffman bracket \cite{Kauffman:TOPOL1987} of a link diagram $D$, and $\Delta_{k}:=(-1)^{k}\frac{A^{2(k+1)-A^{-2(k+1)}}}{A^2-A^{-2}}$.
Note that the writhe of \pic{0.2}{E2_2b+1} is $(2b+1)$ and so we need to multiply by $\left((-1)^{N-1}A^{(N-1)^2+2(N-1)}\right)^{-(2b+1)}$ to obtain a unframed knot invariant (see \cite[Lemma~14.1]{Lickorish:1997}).
We use the notation described in \cite[Chapters~13, 14]{Lickorish:1997}.
\par
By using identities in \cite[Chapter~14]{Lickorish:1997}, we have
\begin{equation}\label{eq:linear_skein}
\begin{split}
  &\left\langle
    \pic{0.3}{E2_2b+1}
  \right\rangle
  \\
  =&
  \\
  &\text{(Figure~14.15 in \cite{Lickorish:1997})}
  \\
  &
  \sum_{c\colon\text{even},0\le c\le2(N-1)}
  \frac{\Delta_c}{\theta(N-1,N-1,c)}
  \left\langle
    \pic{0.3}{E2_c_2b+1}
  \right\rangle
  \\
  =&
  \\
  &\text{(Figure~14.14 in \cite{Lickorish:1997})}
  \\
  &\sum_{c}
  (-1)^{N-1-c/2}A^{(2b+1)(-2N+2+c-(N-1)^2+c^2/2)}
  \frac{\Delta_c}{\theta(N-1,N-1,c)}
  \left\langle
    \pic{0.3}{E2_d}
  \right\rangle
  \\
  =&
  \\
  &\text{(Reidemeister moves II and III)}
  \\
  &
  \sum_{c}
  (-1)^{N-1-c/2}A^{(2b+1)(-N^2+1+c+c^2/2)}
  \frac{\Delta_c}{\theta(N-1,N-1,c)}
  \left\langle
    \pic{0.3}{E_c_N-1_N-1}
  \right\rangle
  \\
  =&
  \\
  &\text{(Figure~14.8 in \cite{Lickorish:1997})}
  \\
  &\sum_{c}(-1)^{N-1-c/2}A^{(2b+1)(-N^2+1+c+c^2/2)}
  \left\langle
    \pic{0.3}{E_2}
  \right\rangle.
\end{split}
\end{equation}
\par
Now $\left\langle\pic{0.3}{E_2}\right\rangle$ equals $\left(J_{c+1}(E;t)\Bigm|_{t:=A^{4}}\right)\Delta_{c+1}$.
So we have the following proposition.
\begin{prop}\label{prop:cJ_t}
We have
\begin{equation*}
\begin{split}
  &J_N\left(E^{(2,2b+1)};t\right)
  \\
  =&
  \frac{(-1)^{N-1}t^{-(2b+1)(N^2-1)/2}}{t^{N/2}-t^{-N/2}}
  \sum_{d=0}^{N-1}(-1)^{d}
  t^{(2b+1)(d^2+d)/2}\left(t^{(2d+1)/2}-t^{-(2d+1)/2}\right)
  \\
  &\times
  \sum_{l=0}^{2d}
  \prod_{k=1}^{l}
  \left(t^{(2d+1+k)/2}-t^{-(2d+1+k)/2}\right)
  \left(t^{(2d+1-k)/2}-t^{-(2d+1-k)/2}\right).
\end{split}
\end{equation*}
\end{prop}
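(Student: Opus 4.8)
The plan is to substitute an explicit formula for the colored Jones polynomial of the figure-eight knot into the linear skein computation \eqref{eq:linear_skein} and then simplify. Recall Habiro's formula (which can itself be recovered by the linear skein manipulations used above, presenting $E$ as a double-twist knot): for every integer $n\ge1$,
\begin{equation*}
  J_n(E;t)
  =
  \sum_{l=0}^{n-1}\prod_{k=1}^{l}
  \left(t^{(n+k)/2}-t^{-(n+k)/2}\right)
  \left(t^{(n-k)/2}-t^{-(n-k)/2}\right).
\end{equation*}
Applying this formula with $n=c+1$, using the identification of the last Kauffman bracket in \eqref{eq:linear_skein} as $J_{c+1}(E;A^4)$ times a quantum dimension recorded just after \eqref{eq:linear_skein}, and combining with the expression for $J_N\left(E^{(2,2b+1)};t\right)$ displayed before \eqref{eq:linear_skein}, reduces the proposition to a routine simplification in the single summation variable $c$.

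I would then carry out the bookkeeping. Setting $A=t^{1/4}$ gives $A^4=t$ and $\Delta_k=(-1)^k\frac{t^{(k+1)/2}-t^{-(k+1)/2}}{t^{1/2}-t^{-1/2}}$. Since $(N-1)^2+2(N-1)=N^2-1$ and $2b+1$ is odd, the writhe correction $\left((-1)^{N-1}A^{(N-1)^2+2(N-1)}\right)^{-(2b+1)}$ contributes the sign $(-1)^{N-1}$ and the scalar $t^{-(2b+1)(N^2-1)/4}$, while $1/\Delta_{N-1}$ contributes a further sign $(-1)^{N-1}$, the denominator $t^{N/2}-t^{-N/2}$, and a numerator $t^{1/2}-t^{-1/2}$; the latter cancels against the $t^{1/2}-t^{-1/2}$ in the denominator of $\Delta_c$, leaving exactly the factor $t^{(c+1)/2}-t^{-(c+1)/2}$. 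Writing $c=2d$, the sum over even $c$ with $0\le c\le 2(N-1)$ becomes the sum over $0\le d\le N-1$; the powers of $t$ coming from the writhe correction and from the exponent $A^{(2b+1)(-N^2+1+c+c^2/2)}$ in \eqref{eq:linear_skein} combine to $t^{(2b+1)(-N^2+1+d+d^2)/2}=t^{-(2b+1)(N^2-1)/2}t^{(2b+1)(d^2+d)/2}$; the signs combine to $(-1)^{N-1-c/2}=(-1)^{N-1}(-1)^d$; and Habiro's formula with $n=2d+1$ rewrites the surviving factor $J_{c+1}(E;t)$ as the inner double sum over $l$ and $k$. Assembling these pieces yields precisely the stated identity.

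There is no genuine conceptual obstacle here: the argument is mechanical once Habiro's formula is available. The only step demanding care is the tracking of the various normalization and framing factors — the writhe correction, the quantum dimensions $\Delta_k$, and the mutual cancellation of the $t^{1/2}-t^{-1/2}$ terms — together with checking that the exponents of $t$ agree after the substitution $c=2d$. If instead one wants this section to be self-contained, the one substantive task is to reprove Habiro's formula for $J_n(E;t)$ by the same linear skein techniques already in use.
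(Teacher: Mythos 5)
Your proposal is correct and follows essentially the same route as the paper: starting from \eqref{eq:linear_skein}, identifying the remaining bracket as $J_{c+1}(E;A^4)$ times the quantum dimension, substituting $c=2d$, tracking the writhe correction and the $1/\Delta_{N-1}$ normalization, and invoking the Habiro--L{\^e} formula for $J_{2d+1}(E;t)$. Your bookkeeping of signs and exponents agrees with the stated identity (and you correctly use the quantum dimension $\Delta_c$ of the even color $c$, which is the factor actually used in the paper's computation), so no further comment is needed.
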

\begin{proof}
From \eqref{eq:linear_skein}, we have
\begin{equation*}
\begin{split}
  &(-1)^{N-1}\frac{t^{N/2}-t^{-N/2}}{t^{1/2}-t^{-1/2}}J_N\left(E^{(2,2b+1)};t\right)
  \\
  =&
  ((-1)^{N-1}A^{(N-1)^2+2(N-1)})^{-(2b+1)}
  \\
  &\times
  \left.
    \sum_{\substack{0\le c\le2(N-1)\\ \text{$c$: even}}}
    (-1)^{N-1-c/2}A^{(2b+1)(-N^2+1+c+c^2/2)}
    \left(J_{c+1}(E;t)\Bigm|_{t:=A^{4}}\right)\Delta_{c+1}
  \right|_{A:=t^{1/4}}
    \\
  =&
  \\
  &\text{($c:=2d$)}
  \\
  &
  \sum_{d=0}^{N-1}
  \frac{t^{(2d+1)/2}-t^{-(2d+1)/2}}{t^{1/2}-t^{-1/2}}
  (-1)^{N-1-d}t^{(2b+1)(-N^2+1+2d+2d^2)/4}
  J_{c+1}(E;t)
  \\
  =&
  (-1)^{N-1}\frac{t^{-(2b+1)(N^2-1)/2}}{t^{1/2}-t^{-1/2}}
  \sum_{d=0}^{N-1}
  \left(t^{(2d+1)/2}-t^{-(2d+1)/2}\right)
  (-1)^{d}t^{(2b+1)(d+d^2)/2}
  J_{2d+1}(E;t).
\end{split}
\end{equation*}
Now using the following formula of the $m$-dimensional colored Jones polynomial of the figure-eight knot by K.~Habiro and T.~L{\^e} \cite{Habiro:SURIK2000,Masbaum:ALGGT12003}
\begin{equation*}
  J_{m}(E;t)
  =
  \sum_{l=0}^{m-1}\prod_{k=1}^{l}
  \left(t^{(m+k)/2}-t^{-(m+k)/2}\right)
  \left(t^{(m-k)/2}-t^{-(m-k)/2}\right),
\end{equation*}
we obtain the required formula.
\end{proof}
%%%%%%%%%%%%%%%%%%%%%%%%%%%%%%%%%%%%%%%%%%%%%%%%%%%%%%%%%%%%%%%%%%%%
\section{Limit}
Fix a positive real number $\xi$.
Then from proposition~\ref{prop:cJ_t} we have
\begin{equation*}
  J_N\left(E^{(2,2b+1)};e^{\xi/N}\right)
  =
  \frac{(-1)^{N-1}\exp\left(-\frac{(2b+1)(N^2-1)\xi}{2N}\right)}{2\sinh(\xi/2)}
  \sum_{d=0}^{N-1}\sum_{l=0}^{2d}
  (-1)^{d}f_{d,l}
\end{equation*}
with
\begin{multline*}
  f_{d,l}
  =
  \exp\left(\frac{(2b+1)(d^2+d)\xi}{2N}\right)
  \times
  2\sinh\left(\frac{(2d+1)\xi}{2N}\right)
  \\
  \times
  \prod_{k=1}^{l}4
  \sinh\left(\frac{(2d+1+k)\xi}{2N}\right)
  \sinh\left(\frac{(2d+1-k)\xi}{2N}\right).
\end{multline*}
\begin{lem}
Let $l$ and $d$ be integers with $1\le d\le N-1$ and $0\le l\le2d-2$, we have $f_{d,l}> f_{d-1,l}$.
\end{lem}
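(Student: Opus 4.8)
The plan is to compare $f_{d,l}$ and $f_{d-1,l}$ term by term by forming the ratio $f_{d,l}/f_{d-1,l}$ and showing it exceeds $1$. Writing out the ratio, the exponential prefactor contributes $\exp\!\bigl((2b+1)\bigl((d^2+d)-((d-1)^2+(d-1))\bigr)\xi/(2N)\bigr)=\exp\!\bigl((2b+1)d\xi/N\bigr)$, which is $\geq 1$ since $\xi>0$ and $b\geq 0$ (we may assume $b\geq 0$ throughout; the case of negative $b$ is handled symmetrically, or is excluded by convention). So it suffices to show that the remaining ``$\sinh$'' part does not decrease when $d$ is replaced by $d-1$.

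Next I would isolate the $\sinh$ contributions. The ratio of the $\sinh$-parts is
\begin{equation*}
  \frac{\sinh\!\bigl(\tfrac{(2d+1)\xi}{2N}\bigr)}{\sinh\!\bigl(\tfrac{(2d-1)\xi}{2N}\bigr)}
  \cdot
  \prod_{k=1}^{l}
  \frac{\sinh\!\bigl(\tfrac{(2d+1+k)\xi}{2N}\bigr)\sinh\!\bigl(\tfrac{(2d+1-k)\xi}{2N}\bigr)}
       {\sinh\!\bigl(\tfrac{(2d-1+k)\xi}{2N}\bigr)\sinh\!\bigl(\tfrac{(2d-1-k)\xi}{2N}\bigr)}.
\end{equation*}
I would like to telescope the product. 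Observe that in the numerator the arguments (up to the common factor $\xi/(2N)$) run over $2d+1-k,\dots,2d+1+k$ and in the denominator over $2d-1-k,\dots,2d-1+k$. Pairing the factor $\sinh\bigl(\tfrac{(2d+1-k)\xi}{2N}\bigr)$ in the numerator against $\sinh\bigl(\tfrac{(2d-1-k)\xi}{2N}\bigr)$ in the denominator (and similarly with $+k$), and being careful about possibly negative arguments (where $\sinh$ becomes negative, so one should track signs, or equivalently use $|\sinh|$ together with a separate count of sign changes), the product collapses and one is left with a quotient involving only the ``boundary'' arguments $2d\pm1\pm l$ together with a controlled sign. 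Concretely, I expect the net effect to be that $f_{d,l}/f_{d-1,l}$ equals the exponential factor times a ratio of the form $\dfrac{\prod (\text{large arguments})}{\prod(\text{small arguments})}$ in which every numerator argument exceeds the corresponding denominator argument in absolute value, whence the ratio is $\geq 1$ because $t\mapsto\sinh t$ is increasing and odd.

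The main obstacle is the bookkeeping of signs: when $l$ is large relative to $d$, several of the arguments $2d+1-k$ and $2d-1-k$ are negative, so the individual $\sinh$ factors change sign, and one must verify that the sign changes cancel in pairs between $f_{d,l}$ and $f_{d-1,l}$ so that the inequality is genuinely between positive quantities. The constraint $l\leq 2d-2$ in the statement is presumably exactly what guarantees this cancellation (it ensures the ``deepest'' negative argument on each side matches up). A clean way to organize this is to note that for fixed $d$ the product $\prod_{k=1}^{l}\sinh\bigl(\tfrac{(2d+1+k)\xi}{2N}\bigr)\sinh\bigl(\tfrac{(2d+1-k)\xi}{2N}\bigr)$ can be rewritten, after reindexing $j=2d+1-k$ over $j=2d+1-l,\dots,2d$ and $j=2d+1+k$ over $j=2d+2,\dots,2d+1+l$, as a single product of $\sinh\bigl(\tfrac{j\xi}{2N}\bigr)$ over a symmetric range of integers about $2d+1$, omitting $2d+1$ itself; doing the same for $d-1$ and dividing, the overlapping part of the two integer ranges cancels exactly, leaving a finite, explicitly positive ratio whose numerator arguments dominate. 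Once signs are under control, the remaining inequality is the elementary monotonicity of $\sinh$ on $[0,\infty)$, so no hard analysis is needed.
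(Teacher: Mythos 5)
Your overall strategy---forming the ratio $f_{d,l}/f_{d-1,l}$, extracting the exponential prefactor $\exp\bigl((2b+1)d\xi/N\bigr)$, and appealing to the monotonicity of $\sinh$---is exactly the paper's proof. However, the ``main obstacle'' you identify (negative arguments and sign bookkeeping) does not exist under the stated hypotheses, and this is precisely the point your proposal leaves unresolved (``presumably''). For $1\le k\le l\le 2d-2$ every argument occurring in $f_{d,l}$ or $f_{d-1,l}$ is strictly positive: the smallest ones are $(2d+1-l)\xi/(2N)\ge 3\xi/(2N)$ and $(2d-1-l)\xi/(2N)\ge \xi/(2N)$. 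Hence both quantities are positive, and every single factor of
\begin{equation*}
  \frac{f_{d,l}}{f_{d-1,l}}
  =
  e^{(2b+1)d\xi/N}\,
  \frac{\sinh\frac{(2d+1)\xi}{2N}}{\sinh\frac{(2d-1)\xi}{2N}}
  \prod_{k=1}^{l}
  \frac{\sinh\frac{(2d+1+k)\xi}{2N}}{\sinh\frac{(2d-1+k)\xi}{2N}}
  \cdot
  \frac{\sinh\frac{(2d+1-k)\xi}{2N}}{\sinh\frac{(2d-1-k)\xi}{2N}}
\end{equation*}
exceeds $1$, because $\xi>0$, $b\ge0$, $d\ge1$, and $\sinh$ is positive and increasing on $(0,\infty)$; the constants $2$ and $4$ cancel since both terms have the same number of factors. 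That is the whole proof: no telescoping and no tracking of signs is needed.

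The role of the hypothesis $l\le 2d-2$ is to guarantee positivity of the factors of $f_{d-1,l}$ (for $l=2d-1$ the factor with $k=2d-1$ vanishes, and for $l=2d$ one factor is negative), not to make ``sign changes cancel in pairs.'' Your telescoping computation, if you insisted on carrying it out, does go through: for $l\ge1$ the $\sinh$-part of the ratio collapses to $\sinh\frac{(2d+l)\xi}{2N}\sinh\frac{(2d+1+l)\xi}{2N}\bigm/\bigl(\sinh\frac{(2d-1-l)\xi}{2N}\sinh\frac{(2d-l)\xi}{2N}\bigr)$, which is $>1$ for the same positivity and monotonicity reasons. But it buys nothing over the term-by-term comparison, and as written your argument hinges on an unverified guess at the one step that actually needs checking.
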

\begin{proof}
We first note that $f_{d,l}$ is positive.
We have
\begin{equation}\label{eq:f_d+1_d}
  \frac{f_{d,l}}{f_{d-1,l}}
  =
  \exp\left(\frac{(2b+1)d\xi}{N}\right)
  \frac{\sinh\left(\frac{(2d+1)\xi}{2N}\right)}
       {\sinh\left(\frac{(2d-1)\xi}{2N}\right)}
  \prod_{k=1}^{l}
  \frac{\sinh\left(\frac{(2d+1+k)\xi}{2N}\right)}
       {\sinh\left(\frac{(2d-1+k)\xi}{2N}\right)}
  \frac{\sinh\left(\frac{(2d+1-k)\xi}{2N}\right)}
       {\sinh\left(\frac{(2d-1-k)\xi}{2N}\right)}.
\end{equation}
Since $\xi>0$, we have
\begin{align*}
  \exp\left(\frac{(2b+1)d\xi}{N}\right)
  &>1,
  \\
  \frac{\sinh\left(\frac{(2d+1)\xi}{2N}\right)}
       {\sinh\left(\frac{(2d-1)\xi}{2N}\right)}
  &>1,
  \\
  \frac{\sinh\left(\frac{(2d+1+k)\xi}{2N}\right)}
       {\sinh\left(\frac{(2d-1+k)\xi}{2N}\right)}
  &>1,
  \\
  \frac{\sinh\left(\frac{(2d+1-k)\xi}{2N}\right)}
       {\sinh\left(\frac{(2d-1-k)\xi}{2N}\right)}
  &>1.
\end{align*}
Therefore we have the required inequality.
\end{proof}
\begin{cor}\label{cor:max_f_d}
For any $l$ and $d$ with $0\le d\le N-2$ and $0\le l\le2d$, we have
\begin{equation*}
  f_{d,l}
  <
  f_{N-1,l}.
\end{equation*}
\end{cor}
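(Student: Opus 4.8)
The statement to prove is Corollary~\ref{cor:max_f_d}: for all $d$ with $0\le d\le N-2$ and all $l$ with $0\le l\le 2d$, we have $f_{d,l}<f_{N-1,l}$. The plan is to deduce this from the preceding Lemma by a straightforward chaining argument, with one wrinkle about the range of validity of the Lemma.

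First I would observe that the Lemma gives $f_{d',l}>f_{d'-1,l}$ whenever $1\le d'\le N-1$ and $0\le l\le 2d'-2$, i.e.\ $l\le 2d'-2$. Fix $d$ and $l$ with $0\le l\le 2d$. The idea is to apply the Lemma repeatedly with $d'=d+1,d+2,\dots,N-1$, obtaining
\begin{equation*}
  f_{d,l}<f_{d+1,l}<f_{d+2,l}<\cdots<f_{N-1,l}.
\end{equation*}
For the step from $f_{d'-1,l}$ to $f_{d',l}$ to be licensed by the Lemma we need $l\le 2d'-2$; since $d'\ge d+1$ we have $2d'-2\ge 2d\ge l$, so the hypothesis is satisfied at every step. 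This already handles the case $l\le 2d$, which is exactly the range claimed. (Note the corollary excludes $d=N-1$, so the chain always has at least one link; the inequality $d\le N-2$ guarantees $d+1\le N-1$.)

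The one point that needs care — and the only real obstacle — is making sure the inequality $f_{d,l}<f_{d+1,l}$ at the \emph{first} step is actually covered: here we need $l\le 2(d+1)-2=2d$, which is precisely the hypothesis $0\le l\le 2d$ of the corollary, so there is nothing to worry about. More generally one should double-check the edge case $d=0$ (so $l=0$): then the chain is $f_{0,0}<f_{1,0}<\cdots<f_{N-1,0}$, and each application needs $0\le 2d'-2$, i.e.\ $d'\ge 1$, which holds. So the corollary follows by a finite induction on the number of steps in the chain, with the Lemma supplying the inductive step and its hypothesis being verified at each stage by the monotonicity $2d'-2\ge 2d\ge l$.

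In short: the corollary is an immediate telescoping consequence of the Lemma, and writing it out amounts to checking that the constraint $l\le 2d'-2$ in the Lemma is met for every $d'$ in $\{d+1,\dots,N-1\}$, which reduces to the single inequality $l\le 2d$ already assumed.
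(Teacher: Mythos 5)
Your proposal is correct and is exactly the intended argument: the corollary follows by iterating the Lemma from $d'=d+1$ up to $d'=N-1$, and your verification that the hypothesis $l\le 2d'-2$ holds at every step (reducing to $l\le 2d$ at the first step) is precisely the point that makes the chaining legitimate. The paper states the corollary without proof, implicitly relying on this same telescoping of the Lemma, so there is no substantive difference in approach.
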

Define
\begin{equation*}
  S
  :=
  (-1)^{N-1}
  \sum_{d=0}^{N-1}\sum_{l=0}^{2d}
  (-1)^{d}f_{d,l}
\end{equation*}
so that
\begin{equation*}
  J_{N}\left(E^{(2,2b+1)};e^{\xi/N}\right)
  =
  \frac{\exp\left(-\frac{(2b+1)(N^2-1)\xi}{2N}\right)}{2\sinh(\xi/2)}S.
\end{equation*}
Then we have the following lemma.
\begin{lem}\label{lem:S}
The following inequality holds.
\begin{equation*}
  S
  >
  (1-e^{-\xi/2})\sum_{l=0}^{2N-2}f_{N-1,l}.
\end{equation*}
\end{lem}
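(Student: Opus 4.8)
The plan is to exploit the sign pattern of $S$ together with the monotonicity already established. Although $S$ is an alternating sum, for each fixed $l$ the numbers $f_{d,l}$ form an increasing sequence in $d$ --- this is exactly the content of the lemma preceding Corollary~\ref{cor:max_f_d}, whose hypothesis $0\le l\le 2d-2$ covers every comparison $f_{d-1,l}<f_{d,l}$ that we shall need --- and an alternating sum of a positive increasing sequence in which the \emph{largest} term enters with coefficient $+1$ is bounded below by its leading difference.

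First I would interchange the order of summation. Since the constraints are $0\le l\le 2d$ and $0\le d\le N-1$, and $(-1)^{N-1}(-1)^{d}=(-1)^{N-1-d}$, we may write
\[
  S=\sum_{l=0}^{2N-2}T_l,\qquad
  T_l:=\sum_{d=\ceil{l/2}}^{N-1}(-1)^{N-1-d}f_{d,l},
\]
so that in each $T_l$ the term $f_{N-1,l}$ has coefficient $+1$. For $l\in\{2N-3,2N-2\}$ the index $d$ takes only the value $N-1$, hence $T_l=f_{N-1,l}$. For $0\le l\le 2N-4$ both $d=N-1$ and $d=N-2$ occur; grouping the summands of $T_l$ into consecutive pairs starting from $d=N-1$, every pair $f_{d,l}-f_{d-1,l}$ and the possible single leftover term (which, a parity check shows, enters with sign $+$) is nonnegative by monotonicity, so $T_l\ge f_{N-1,l}-f_{N-2,l}$.

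The decisive step is the inequality $f_{N-2,l}<e^{-\xi/2}f_{N-1,l}$ for $0\le l\le 2N-4$. Here I would use the explicit ratio \eqref{eq:f_d+1_d} with $d:=N-1$. The restriction $l\le 2N-4$ guarantees that all the $\sinh$ arguments occurring in the denominators of \eqref{eq:f_d+1_d} are positive, so, exactly as in the proof of the previous lemma, each $\sinh$-quotient there is $>1$; moreover
\[
  \exp\!\left(\frac{(2b+1)(N-1)\xi}{N}\right)
  \ \ge\ \exp\!\left(\frac{(N-1)\xi}{N}\right)
  \ \ge\ e^{\xi/2},
\]
using $2b+1\ge1$ (the same hypothesis used in the proof of the previous lemma) and $N\ge2$. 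Therefore $f_{N-1,l}/f_{N-2,l}>e^{\xi/2}$, equivalently $f_{N-1,l}-f_{N-2,l}>(1-e^{-\xi/2})f_{N-1,l}$.

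Putting the pieces together: for $0\le l\le 2N-4$ we obtain $T_l>(1-e^{-\xi/2})f_{N-1,l}$, while for $l\in\{2N-3,2N-2\}$ we have $T_l=f_{N-1,l}>(1-e^{-\xi/2})f_{N-1,l}$ since $0<1-e^{-\xi/2}<1$ and $f_{N-1,l}>0$; summing over $l$ gives $S>(1-e^{-\xi/2})\sum_{l=0}^{2N-2}f_{N-1,l}$ (the case $N=1$ being immediate, as then $S=f_{0,0}=\sum_{l=0}^{0}f_{0,l}$). The main obstacle is this middle step: one has to make sure the $\sinh$ arguments remain positive --- which is precisely why the two terms $f_{N-1,2N-3}$ and $f_{N-1,2N-2}$ of $\sum_l f_{N-1,l}$ must be carried along separately rather than compared through \eqref{eq:f_d+1_d} --- and that the exponential factor delivers exactly the constant $e^{\xi/2}$ uniformly in $N\ge2$; a sharper ratio is only available in the limit $N\to\infty$, which is why the coarse constant $1-e^{-\xi/2}$ is the one recorded in the statement. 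The alternating-sum bookkeeping inside each $T_l$ (checking the sign of the leftover term) is routine.
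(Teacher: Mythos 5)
Your argument is correct and is essentially the paper's own proof: both reduce $S$, after discarding nonnegative paired differences $f_{d,l}-f_{d-1,l}$ justified by the preceding monotonicity lemma, to the bound $\sum_{l=0}^{2N-4}\left(f_{N-1,l}-f_{N-2,l}\right)+f_{N-1,2N-3}+f_{N-1,2N-2}$ and then apply the ratio estimate \eqref{eq:f_N-1_N-2} giving $f_{N-1,l}/f_{N-2,l}>e^{\xi/2}$. The only difference is organizational: you interchange the order of summation and pair within each fixed $l$, which lets you avoid the paper's separate treatment of $N$ even and $N$ odd.
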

\begin{proof}
From \eqref{eq:f_d+1_d} and the subsequent inequalities we have
\begin{equation}\label{eq:f_N-1_N-2}
  \frac{f_{N-1,l}}{f_{N-2,l}}
  >
  e^{(2b+1)(N-1)\xi/N}
  \ge
  e^{(N-1)\xi/N}
  \ge
  e^{\xi/2}
\end{equation}
since $b\ge0$ and $N\ge2$.
\par
Suppose that $N$ is even.
We have
\begin{equation*}
\begin{split}
  S
  &=
  \sum_{d=0}^{N-1}\sum_{l=0}^{2d}
  (-1)^{d-1}f_{d,l}
  \\
  &=
  \sum_{k=0}^{(N-2)/2}
  \left(
    -\sum_{l=0}^{4k}f_{2k,l}
    +
    \sum_{l=0}^{4k+2}f_{2k+1,l}
  \right)
  \\
  &=
  \sum_{k=0}^{(N-2)/2}
  \left(
    \sum_{l=0}^{4k}
    \left(f_{2k+1,l}-f_{2k,l}\right)
    +
    f_{2k+1,4k+1}+f_{2k+1,4k+2}
  \right)
\end{split}
\end{equation*}
Since $f_{2k+1,l}-f_{2k,l}>0$, $f_{2k+1,4k+1}>0$, and $f_{2k+1,4k+2}>0$, we have
\begin{equation*}
\begin{split}
  S
  &>
  \sum_{l=0}^{2N-4}\left(f_{N-1,l}-f_{N-2,l}\right)
  +
  f_{N-1,2N-3}+f_{N-1,2N-2}
  \\
  &>
  (1-e^{-\xi/2})\sum_{l=0}^{2N-4}f_{N-1,l}
  +
  f_{N-1,2N-3}+f_{N-1,2N-2}
  >
  (1-e^{-\xi/2})\sum_{l=0}^{2N-2}f_{N-1,l}
\end{split}
\end{equation*}
from \eqref{eq:f_N-1_N-2}.
\par
Suppose that $N$ is odd.
We have
\begin{equation*}
\begin{split}
  S
  &=
  \sum_{d=0}^{N-1}\sum_{l=0}^{2d}
  (-1)^{d}f_{d,l}
  \\
  &=
  f_{0,0}
  +
  \sum_{k=0}^{(N-3)/2}
  \left(
    -\sum_{l=0}^{4k+2}f_{2k+1,l}
    +
    \sum_{l=0}^{4k+4}f_{2k+2,l}
  \right)
  \\
  &=
  f_{0,0}
  +
  \sum_{k=0}^{(N-3)/2}
  \left(
    \sum_{l=0}^{4k+2}
    \left(f_{2k+2,l}-f_{2k+1,l}\right)
    +
    f_{2k+2,4k+3}+f_{2k+2,4k+4}
  \right).
\end{split}
\end{equation*}
As in the case where $N$ is even, we have
\begin{equation*}
  S
  >
  \sum_{l=0}^{2N-4}
  \left(f_{N-1,l}-f_{N-2,l}\right)
  +
  f_{N-1,2N-3}+f_{N-1,2N-2}
  >
  (1-e^{-\xi/2})
  \sum_{l=0}^{2N-2}f_{N-1,l}.
\end{equation*}
The proof is complete.
\end{proof}
Now we look for the maximum of $\left\{f_{N-1,l}\bigm| l=0,1,2,\dots,2N-2\right\}$.
\par
We have
\begin{lem}\label{lem:f}
Assume that $1\le l\le2N-2$ and let $\delta$ be a positive real number.
\begin{enumerate}
\item
If $\cosh\left(\frac{l}{N}\xi\right)\ge\cosh(2\xi)-\frac{1}{2}$, then $f_{N-1,l-1}>f_{N-1,l}$.
\item
If $\cosh\left(\frac{l}{N}\xi\right)<\cosh(2\xi)-\frac{1}{2}-\delta$, then there exists $N_0$ such that $f_{N-1,l-1}<f_{N-1,l}$ for $N>N_0$.
\end{enumerate}
\end{lem}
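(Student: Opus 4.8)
The plan is to compute the ratio of consecutive terms $f_{N-1,l}/f_{N-1,l-1}$ in closed form and to compare it with $1$. First I would observe that, in the product defining $f_{d,l}$, only the factor indexed by $k=l$ changes when $l$ is decreased by one, while the prefactor $\exp\!\left(\frac{(2b+1)(d^{2}+d)\xi}{2N}\right)$ and the factor $2\sinh\!\left(\frac{(2d+1)\xi}{2N}\right)$ do not involve $l$; hence, taking $d=N-1$,
\begin{equation*}
  \frac{f_{N-1,l}}{f_{N-1,l-1}}
  =
  4\sinh\!\left(\frac{(2N-1+l)\xi}{2N}\right)\sinh\!\left(\frac{(2N-1-l)\xi}{2N}\right).
\end{equation*}
Next I would apply the product-to-sum identity $2\sinh A\,\sinh B=\cosh(A+B)-\cosh(A-B)$ with $A+B=\frac{(2N-1)\xi}{N}=2\xi-\frac{\xi}{N}$ and $A-B=\frac{l\xi}{N}$, which turns the product into
\begin{equation*}
  \frac{f_{N-1,l}}{f_{N-1,l-1}}
  =
  2\cosh\!\left(2\xi-\frac{\xi}{N}\right)-2\cosh\!\left(\frac{l\xi}{N}\right).
\end{equation*}
Therefore $f_{N-1,l-1}>f_{N-1,l}$ holds exactly when $\cosh\!\left(\frac{l\xi}{N}\right)>\cosh\!\left(2\xi-\frac{\xi}{N}\right)-\frac12$, and $f_{N-1,l-1}<f_{N-1,l}$ holds exactly when the reverse strict inequality holds.

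Granting this reformulation, both assertions follow from the monotonicity and continuity of $\cosh$ on $[0,\infty)$. For (i): since $\xi>0$ we have $2\xi-\frac{\xi}{N}<2\xi$, hence $\cosh\!\left(2\xi-\frac{\xi}{N}\right)-\frac12<\cosh(2\xi)-\frac12\le\cosh\!\left(\frac{l}{N}\xi\right)$, which is precisely the inequality forcing $f_{N-1,l-1}>f_{N-1,l}$. For (ii): because $\cosh\!\left(2\xi-\frac{\xi}{N}\right)\to\cosh(2\xi)$ as $N\to\infty$, and this limit is independent of $l$, there is an $N_0$ depending only on $\xi$ and $\delta$ such that $\cosh\!\left(2\xi-\frac{\xi}{N}\right)>\cosh(2\xi)-\delta$ whenever $N>N_0$; combined with the hypothesis $\cosh\!\left(\frac{l}{N}\xi\right)<\cosh(2\xi)-\frac12-\delta$ this gives $\cosh\!\left(\frac{l}{N}\xi\right)<\cosh\!\left(2\xi-\frac{\xi}{N}\right)-\frac12$, hence $f_{N-1,l-1}<f_{N-1,l}$.

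I do not anticipate a genuine obstacle: the proof is a single trigonometric simplification followed by elementary facts about $\cosh$. The one point I would make explicit is that the threshold $N_0$ in (ii) is uniform in $l$ — this is what lets the lemma pin down the index maximizing $\{f_{N-1,l}\}$, roughly the $l$ with $\cosh(l\xi/N)$ near $\cosh(2\xi)-\frac12$, i.e.\ $l/N$ near $\arccosh\!\left(\cosh(2\xi)-\frac12\right)/\xi$ — and the uniformity is automatic, since the discrepancy $\xi/N$ between $\cosh(2\xi-\xi/N)$ and $\cosh(2\xi)$ does not depend on $l$.
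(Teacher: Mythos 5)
Your proposal is correct and follows essentially the same route as the paper: compute the ratio $f_{N-1,l}/f_{N-1,l-1}$, rewrite $4\sinh\left(\frac{(2N-1+l)\xi}{2N}\right)\sinh\left(\frac{(2N-1-l)\xi}{2N}\right)$ as $2\cosh\left(2\xi-\frac{\xi}{N}\right)-2\cosh\left(\frac{l}{N}\xi\right)$, and compare with $1$ using monotonicity of $\cosh$ for (i) and the choice of $N_0$ with $\cosh(2\xi)-\cosh\left(2\xi-\frac{\xi}{N_0}\right)<\delta$ (uniform in $l$) for (ii). No gaps.
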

\begin{proof}
First of all, we have
\begin{equation}\label{eq:f^N}
\begin{split}
  \frac{f_{N-1,l}}{f_{N-1,l-1}}
  &=
  4\sinh\left(\frac{(2N-1+l)\xi}{2N}\right)
  \sinh\left(\frac{(2N-1-l)\xi}{2N}\right)
  \\
  &=
  2\cosh\left(2\xi-\frac{\xi}{N}\right)
  -
  2\cosh\left(\frac{l}{N}\xi\right).
\end{split}
\end{equation}
\begin{enumerate}
\item
If $\cosh\left(\frac{l}{N}\xi\right)\ge\cosh(2\xi)-\frac{1}{2}$, it follows that
\begin{equation*}
  \frac{f_{N-1,l}}{f_{N-1,l-1}}
  \le
  2\cosh\left(2\xi-\frac{\xi}{N}\right)
  -
  2\cosh(2\xi)+1
  <1.
\end{equation*}
\item
If $\cosh\left(\frac{l}{N}\xi\right)<\cosh(2\xi)-\frac{1}{2}-\delta$, then it follows that
\begin{equation*}
  \frac{f_{N-1,l}}{f_{N-1,l-1}}
  >
  2\cosh\left(2\xi-\frac{\xi}{N}\right)
  -
  2\cosh(2\xi)+1+2\delta.
\end{equation*}
Therefore if we choose $N_0$ so that
\begin{equation*}
  \cosh(2\xi)-\cosh\left(2\xi-\frac{\xi}{N_0}\right)<\delta,
\end{equation*}
the inequality $f_{N-1,l-1}<f_{N-1,l}$ holds for $N>N_0$.
\end{enumerate}
The proof is complete.
\end{proof}
Now we define
\begin{align*}
  \kappa
  &:=
  \frac{1}{2}\arccosh\left(\frac{3}{2}\right)
  =\frac{1}{2}\log\left(\frac{3+\sqrt{5}}{2}\right),
  \\
  \varphi(\xi)
  &:=
  \arccosh\left(\cosh(2\xi)-\frac{1}{2}\right)
  \\
  &=
  \log
  \left(
    \frac{1}{2}
    \left(
      2\cosh(2\xi)-1+\sqrt{(2\cosh(2\xi)+1)(2\cosh(2\xi)-3)}
    \right)
  \right).
\end{align*}
\begin{rem}
If $\xi>\kappa$, then $\varphi(\xi)$ is real with $0<\varphi(\xi)<2\xi$ because $\cosh(\varphi(\xi))=\cosh(2\xi)-\frac{1}{2}$, which is between $1$ and $\cosh(2\xi)$.
\end{rem}
From Lemma~\ref{lem:f}, we have
\begin{prop}\label{prop:f_max}
If $\xi\le\kappa$, then the maximum of $\left\{f_{N-1,l}\bigm| l=0,1,\dots,2N-2\right\}$ is $f_{N-1,0}$.
Moreover from Corollary~\ref{cor:max_f_d}, $f_{N-1,0}$ is indeed the maximum of $\left\{f_{d,l}\bigm| 0\le l\le2d,0\le d\le N-1\right\}$.
\par
If $\xi>\kappa$, the maximum of $\left\{f_{N-1,l}\bigm| l=0,1,\dots,2N-2\right\}$ is $f_{N-1,\floor{\varphi(\xi)N/\xi}-1}$ for sufficiently large $N$, where $\floor{x}$ is the greatest integer less than or equal to $x$.
Moreover from Corollary~\ref{cor:max_f_d}, $f_{N-1,\floor{\varphi(\xi)N/\xi}-1}$ is indeed the maximum of $\left\{f_{d,l}\bigm| 0\le l\le2d,0\le d\le N-1\right\}$.
\end{prop}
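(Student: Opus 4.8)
The plan is to exploit the strict unimodality of the finite sequence $\bigl(f_{N-1,l}\bigr)_{l=0}^{2N-2}$. By \eqref{eq:f^N} the consecutive ratio
\begin{equation*}
  \frac{f_{N-1,l}}{f_{N-1,l-1}}
  =
  2\cosh\left(2\xi-\frac{\xi}{N}\right)-2\cosh\left(\frac{l}{N}\xi\right)
\end{equation*}
is a strictly decreasing function of $l$ on $1\le l\le2N-2$ (since $\cosh$ is strictly increasing on $[0,\infty)$), so once it drops below $1$ it stays below $1$; hence the sequence first increases strictly and then decreases strictly, and $\max_{l}f_{N-1,l}$ is attained at the largest index $l$ with $\cosh(l\xi/N)<\cosh(2\xi-\xi/N)-\tfrac12$. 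Since Corollary~\ref{cor:max_f_d} gives $f_{d,l}\le f_{N-1,l}$ for every admissible pair $(d,l)$, it then suffices to locate $\max_{l}f_{N-1,l}$ and re-invoke Corollary~\ref{cor:max_f_d}.

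First I would dispose of the case $\xi\le\kappa$. Here $\cosh(2\xi)-\tfrac12\le\cosh(2\kappa)-\tfrac12=1\le\cosh(l\xi/N)$ for every $l\ge1$, so Lemma~\ref{lem:f}(i) applies to all $l\in\{1,\dots,2N-2\}$; thus $\bigl(f_{N-1,l}\bigr)_{l}$ is strictly decreasing, its maximum is $f_{N-1,0}$, and Corollary~\ref{cor:max_f_d} upgrades this to the maximum of all $f_{d,l}$.

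Next I would treat $\xi>\kappa$, so that $\varphi(\xi)$ is real with $0<\varphi(\xi)<2\xi$ and $\cosh(\varphi(\xi))=\cosh(2\xi)-\tfrac12$. By Lemma~\ref{lem:f}(i), $f_{N-1,l-1}>f_{N-1,l}$ whenever $l\ge\varphi(\xi)N/\xi$, so the sequence is already decreasing beyond $l=\floor{\varphi(\xi)N/\xi}$. For the increasing part I would fix a small $\delta>0$, set $\varphi_{\delta}:=\arccosh\bigl(\cosh(2\xi)-\tfrac12-\delta\bigr)<\varphi(\xi)$, and apply Lemma~\ref{lem:f}(ii) — whose threshold $N_{0}$ depends only on $\delta$, hence is uniform in $l$ — to conclude $f_{N-1,l-1}<f_{N-1,l}$ for all $1\le l<\varphi_{\delta}N/\xi$ once $N>N_{0}(\delta)$. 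Thus for large $N$ the maximiser lies in the window between $\floor{\varphi_{\delta}N/\xi}$ and $\floor{\varphi(\xi)N/\xi}$; letting $\delta=\delta_{N}\downarrow0$ slowly enough that the constraint $\cosh(2\xi)-\cosh(2\xi-\xi/N)<\delta_{N}$ is preserved shrinks this window to bounded width, after which one reads the exact crossing index off \eqref{eq:f^N} — whose ratio at $l\approx\varphi(\xi)N/\xi$ tends to $2\cosh(2\xi)-2\cosh(\varphi(\xi))=1$ — obtaining $\floor{\varphi(\xi)N/\xi}-1$ for all sufficiently large $N$. Corollary~\ref{cor:max_f_d} again promotes this to the maximum over all $(d,l)$.

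The main obstacle is precisely this last step. Taken separately, parts (i) and (ii) of Lemma~\ref{lem:f} fail to control the sequence in a neighbourhood of the critical scale $l=\varphi(\xi)N/\xi$ whose width is of order $N$ for a fixed $\delta$, so one is forced to let $\delta$ depend on $N$ while staying compatible with the $N_{0}(\delta)$ of Lemma~\ref{lem:f}(ii), and to extract the precise integer maximiser from the explicit formula \eqref{eq:f^N} rather than from the qualitative comparisons alone; this is where the delicate bookkeeping sits. Everything else is routine monotonicity.
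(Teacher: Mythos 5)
Your strategy is the same as the paper's (the ratio \eqref{eq:f^N}, Lemma~\ref{lem:f}, Corollary~\ref{cor:max_f_d}, and the case split at $\kappa$), and your opening observation is a genuine sharpening: since the ratio $f_{N-1,l}/f_{N-1,l-1}=2\cosh(2\xi-\xi/N)-2\cosh(l\xi/N)$ is strictly decreasing in $l$, the sequence is strictly unimodal and its maximiser is exactly the largest $l$ with $\cosh(l\xi/N)<\cosh(2\xi-\xi/N)-\tfrac12$. The case $\xi\le\kappa$ is complete, and you have also put your finger on the genuinely delicate point in the case $\xi>\kappa$: for a fixed $\delta$, Lemma~\ref{lem:f}(ii) (whose $N_0$ depends on $\delta$) only controls indices below $\varphi_\delta N/\xi$, leaving a window of width proportional to $N$ below the critical scale uncontrolled.

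The gap is your final step, which is asserted rather than carried out, and which in fact cannot yield the index claimed in the statement. Your own first paragraph shows that the maximiser is $\lceil\psi_N N/\xi\rceil-1$, where $\psi_N:=\arccosh\bigl(\cosh(2\xi-\tfrac{\xi}{N})-\tfrac12\bigr)$; the question is whether this equals $\floor{\varphi(\xi)N/\xi}-1$. But
\begin{equation*}
  \lim_{N\to\infty}\frac{\bigl(\varphi(\xi)-\psi_N\bigr)N}{\xi}
  =
  \frac{\sinh(2\xi)}{\sinh\varphi(\xi)}
  >1,
\end{equation*}
and this constant diverges as $\xi\downarrow\kappa$ (for instance it is about $4$ at $\xi=0.5$, where a direct check with $N=100$ puts the maximiser at $l=54$ while $\floor{\varphi(\xi)N/\xi}-1=57$). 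So the true maximiser sits a bounded, but in general larger-than-one, number of steps below $\varphi(\xi)N/\xi$, and whether it coincides with $\floor{\varphi(\xi)N/\xi}-1$ depends on the fractional part of $\varphi(\xi)N/\xi$; shrinking $\delta_N\downarrow0$ as you propose only narrows the uncontrolled window to this bounded width, it does not select the stated index. What your argument does establish is that the maximiser lies within $O(1)$ of $\varphi(\xi)N/\xi$ — which is all that the subsequent computation of $\lim_N\frac1N\log S$ uses — but not the exact index in the Proposition. Be aware that this is the same step the paper's own proof treats loosely (it invokes Lemma~\ref{lem:f}(ii) with a $\delta'$ depending on $l$ and $N$ without tracking $N_0(\delta')$); a clean fix is either to prove the maximiser is $\lceil\psi_N N/\xi\rceil-1$, or to weaken the pinpointing to ``within a bounded distance of $\varphi(\xi)N/\xi$'' and check that this suffices for the limit.
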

\begin{proof}
If $\xi\le\kappa$, that is, $\cosh(2\xi)-\frac{1}{2}\le1$, then any $l$ satisfies the assumption of (i) in Lemma~\ref{lem:f}.
So $f_{N-1,l}$ is decreasing with respect to $l$ ($0\le l\le 2N-2$) and the maximum is $f_{N-1,0}$.
\par
If $\xi>\kappa$, that is, $\cosh(2\xi)-\frac{1}{2}>1$, then we choose $\delta>0$ such that $\cosh(2\xi)-\frac{1}{2}-\delta>1$.
Now if $N$ is sufficiently large, there exists $l$ such that $\cosh(\frac{l}{N}\xi)<\cosh(2\xi)-\frac{1}{2}-\delta$.
Since $\cosh\varphi(\xi)=\cosh\left(\frac{\varphi(\xi)N}{\xi}\times\frac{\xi}{N}\right)=\cosh(2\xi)-\frac{1}{2}$, we see that
\begin{equation*}
  \cosh
  \left(
    \left\lfloor
      \frac{\varphi(\xi)N}{\xi}
    \right\rfloor
    \times\frac{\xi}{N}
  \right)
  \le
  \cosh(2\xi)-\frac{1}{2}
  <
  \cosh
  \left(
    \left(
      \left\lfloor
        \frac{\varphi(\xi)N}{\xi}
      \right\rfloor
      +1
    \right)
    \times\frac{\xi}{N}
  \right).
\end{equation*}
This means that $\left\lfloor\frac{\varphi(\xi)}{\xi}N\right\rfloor$ is the maximum of all integers $l$ satisfying $\cosh(\frac{l}{N}\xi)\le\cosh(2\xi)-\frac{1}{2}$.
So if $l<\left\lfloor\frac{\varphi(\xi)N}{\xi}\right\rfloor$, then we can choose $\delta'>0$ such that $\cosh\left(\frac{l}{N}\xi\right)<\cosh(2\xi)-\frac{1}{2}-\delta'$ and so $f_{N-1,l-1}<f_{N-1,l}$ from (ii) in Lemma~\ref{lem:f}.
If $l\ge\left\lfloor\frac{\varphi(\xi)N}{\xi}\right\rfloor$, then $\cosh\left(\frac{l}{N}\xi\right)\ge\cosh(2\xi)-\frac{1}{2}$ and so $f_{N-1,l}>f_{N-1,l-1}$ from (i) in Lemma~\ref{lem:f}.
\par
Therefore the maximum is $f_{N-1,\floor{\varphi(\xi)N/\xi}-1}$.
\end{proof}
From Lemma~\ref{lem:S} and Proposition~\ref{prop:f_max} we have the following inequalities when $\xi>\kappa$:
\begin{equation*}
  (1-e^{-\xi/2})f_{N-1,\floor{\varphi(\xi)N/\xi}-1}
  <
  S
  <
  N^2\times
  f_{N-1,\floor{\varphi(\xi)N/\xi}-1}.
\end{equation*}
Here the second inequality holds since there are $\sum_{d=0}^{N-1}(2d+1)=N^2$ terms in $S$.
\par
Taking $\log$ and dividing by $N$ we have
\begin{multline*}
  \frac{\log(1-e^{-\xi/2})}{N}
  +
  \frac{\log\left(f_{N-1,\floor{\varphi(\xi)N/\xi}-1}\right)}{N}
  <
  \frac{\log{S}}{N}
  \\
  <
  \frac{2\log{N}}{N}
  +
  \frac{\log\left(f_{N-1,\floor{\varphi(\xi)N/\xi}-1}\right)}{N}.
\end{multline*}
Since the limits $\lim_{N\to\infty}\frac{\log(1-e^{-\xi/2})}{N}$ and $\lim_{N\to\infty}\frac{2\log{N}}{N}$ vanish, we have from the squeeze theorem
\begin{equation*}
\begin{split}
  &
  \lim_{N\to\infty}\frac{\log{S}}{N}
  \\
  =&
  \lim_{N\to\infty}
  \frac{\log\left(f_{N-1,\floor{\varphi(\xi)N/\xi}-1}\right)}{N}
  \\
  =&
  \lim_{N\to\infty}
  \frac{(2b+1)(N^2-N)\xi}{2N^2}
  +
  \lim_{N\to\infty}
  \frac{1}{N}
  \log\left(2\sinh\left(\frac{(2N-1)\xi}{2N}\right)\right)
  \\
  &+
  \lim_{N\to\infty}
  \sum_{k=1}^{\floor{\varphi(\xi)N/\xi}-1}
  \frac{1}{N}
  \log\left(2\sinh\left(\xi+\frac{(k-1)\xi}{2N}\right)\right)
  \\
  &+
  \lim_{N\to\infty}
  \sum_{k=1}^{\floor{\varphi(\xi)N/\xi}-1}
  \frac{1}{N}
  \log\left(2\sinh\left(\xi-\frac{(k+1)\xi}{2N}\right)\right)
  \\
  =&
  \frac{(2b+1)\xi}{2}
  +
  \frac{2}{\xi}
  \int_{\xi}^{\xi+\varphi(\xi)/2}\log(2\sinh{x})\,dx
  +
  \frac{2}{\xi}
  \int_{\xi-\varphi(\xi)/2}^{\xi}\log(2\sinh{x})\,dx
  \\
  =&
  \frac{(2b+1)\xi}{2}
  +
  \frac{2}{\xi}
  \int_{\xi-\varphi(\xi)/2}^{\xi+\varphi(\xi)/2}\log(2\sinh{x})\,dx
  \\
  =&
  \frac{(2b+1)\xi}{2}
  +
  \frac{2}{\xi}
  \int_{\xi-\varphi(\xi)/2}^{\xi+\varphi(\xi)/2}(\log(1-e^{-2x})+x)\,dx
  \\
  &\text{(putting $y:=e^{-2x}$)}
  \\
  =&
  \frac{(2b+1)\xi}{2}
  -
  \frac{2}{\xi}
  \int_{e^{\varphi(\xi)-2\xi}}^{e^{-\varphi(\xi)-2\xi}}\frac{\log(1-y)}{2y}\,dy
  +
  2\varphi(\xi)
  \\
  =&
  \frac{(2b+1)\xi}{2}
  +
  \frac{1}{\xi}\Li_2(e^{-\varphi(\xi)-2\xi})-\frac{1}{\xi}\Li_2(e^{\varphi(\xi)-2\xi})
  +
  2\varphi(\xi).
\end{split}
\end{equation*}
Since $J_N\left(E^{(2,2b+1)};e^{\xi/N}\right)=\frac{e^{-(2b+1)(N^2-1)\xi/(2N)}}{2\sinh(\xi/2)}S$, we finally have
\begin{equation*}
  \lim_{N\to\infty}
  \frac{\log J_N\left(E^{(2,2b+1)};e^{\xi/N}\right)}{N}
  =
  \frac{1}{\xi}\Li_2(e^{-\varphi(\xi)-2\xi})-\frac{1}{\xi}\Li_2(e^{\varphi(\xi)-2\xi})
  +
  2\varphi(\xi).
\end{equation*}
%%%%%%%%%%%%%%%%%%%%%%%%%%%%%%%%%%%%%%%%%%%%%%%%%%%%%%%%%%%%%%%%%%
\section{Representation}\label{sec:rep}
In this section, we introduce a representation $\rho_{u}$ from $\pi_1(S^3\setminus E^{(2,2b+1)})$ to $\SL(2;\C)$.
In fact, it turns out to be a representation to $\SL(2;\R)$.
\par
Put $\FigEight:=S^3\setminus\Int{N(E)}$ and $\FigEightCable:=S^3\setminus\Int{N(E^{(2,2b+1)})}$.
Let $L$ be a knot in a solid torus $D:=D^2\times S^1$ depicted in Figure~\ref{fig:cable_space}.
Put $\Cable:=D\setminus\Int{N(L)}$.
\begin{figure}[H]
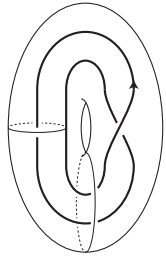

\pic{0.4}{cable_space}
\caption{A knot $L$ in a solid torus}
\label{fig:cable_space}
\end{figure}
Then $\FigEightCable$ is obtained from $\Cable$ and $\FigEight$ by identifying $\partial{D^2}\times S^1\subset\partial{D}$ with $\partial\FigEight$ so that $\{\text{point}\}\times S^1$ is identified with $\lambda_{E}\mu_{E}^{b}$ and $(\partial{}D^2)\times\{\text{point}\}$ is identified with $\mu_{E}$, where $\lambda_{E}\subset\partial\FigEight$ and $\mu_{E}\subset\partial\FigEight$ are the preferred longitude and the meridian of $E$ (Figure~\ref{fig:decomposition}).
\begin{figure}[H]
\pic{0.4}{cable_space}\hspace{5mm}$\cup$\hspace{5mm}\pic{0.4}{fig8_complement}
\caption{$\FigEightCable$ is obtained from $\FigEight$ (left) and a knot $L$ in the solid torus (right).}
\label{fig:decomposition}
\end{figure}
We calculate the fundamental group of $S^3\setminus{E^{(2,2b+1)}}$.
From now on, we identify $\pi_1(S^3\setminus{E})$ with $\pi_1(\FigEight)$, $\pi_1(S^3\setminus{E^{(2,2b+1)}})$ with $\pi_1(\FigEightCable)$, and $\pi_1(D\setminus{L})$ with $\pi_1(\Cable)$, where we choose basepoints in $\partial\FigEight$ that is identified with $\partial{D}$.
If we choose Wirtinger generators as in Figure~\ref{fig:fig8}, then the fundamental group $\pi_1(\FigEight)$ is given as
\begin{equation}\label{eq:pi1_fig8}
  \langle
    x,y\mid xy^{-1}x^{-1}yx=yxy^{-1}x^{-1}y
  \rangle.
\end{equation}
\begin{figure}[H]
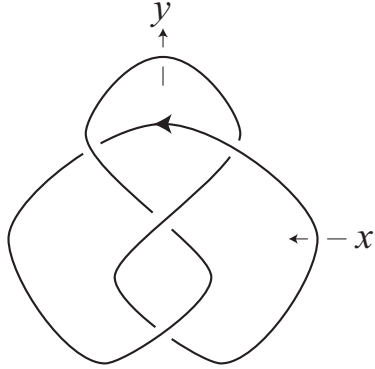

\pic{0.4}{fig8}
\caption{Figure-eight knot $E$}
\label{fig:fig8}
\end{figure}
Since $\Cable$ is homeomorphic to the exterior of the $(2,4)$-torus link (Figure~\ref{fig:2_4_torus}), the fundamental group $\pi_1(\Cable)$ is given as
\begin{equation*}
  \pi_1(\Cable)
  =
  \langle
    p,r\mid prpr=rprp
  \rangle,
\end{equation*}
if we choose generators as in Figure~\ref{fig:2_4_torus}.
\begin{figure}
\pic{0.4}{2_4_torus}
\hspace{20mm}
\pic{0.4}{cable_space_generators}
\caption{$(2,4)$-torus knot (left) and $\Cable$ (right)}
\label{fig:2_4_torus}
\end{figure}
\par
Observe that the meridian of the solid torus $D$ is given as $prpr^{-1}$ (as you can read off by going along the inner circle of the $(2,4)$-torus knot in Figure~\ref{fig:2_4_torus}) and that the longitude $\lambda_E$ of the figure-eight knot is given as $xy^{-1}xyx^{-2}yxy^{-1}x^{-1}$.
By van Kampen's theorem, the fundamental group of $S^3\setminus E^{(2,2b+1)}$ is given as follows:
\begin{equation*}
\begin{split}
  &\pi_1(\FigEightCable)
  \\
  =&
  \left\langle
    x,y\mid xy^{-1}x^{-1}yx=yxy^{-1}x^{-1}y
  \right\rangle
  \bigcup_{\substack{r=\lambda_{E}\mu_{E}^{b}\\prpr^{-1}=\mu_{E}}}
  \left\langle
    p,r\mid prpr=rprp
  \right\rangle
  \\
  =&
  \left\langle
    x,y,p,r
    \mid
    xy^{-1}x^{-1}yx=yxy^{-1}x^{-1}y,
    prpr=rprp,
    x=prpr^{-1},
    \lambda_{E}=rx^{-b}
  \right\rangle,
\end{split}
\end{equation*}
where $\lambda_{E}:=xy^{-1}xyx^{-2}yxy^{-1}x^{-1}$ and we choose $x$ as the meridian $\mu_{E}$ of the figure-eight knot.
Note that we do not need $r$ as a generator.
\par
For a complex number $u$, we define $\rho_{u}\colon\pi_1(\FigEightCable)\to\SL(2;\C)$ as follows.
First define $\rho_{u}\bigm|_{\pi_1(\FigEight)}$ as
\begin{align*}
  \rho_{u}(x)
  &:=
  \begin{pmatrix}
    e^u&1 \\
    0  &e^{-u}
  \end{pmatrix},
  \\
  \rho_{u}(y)
  &:=
  \begin{pmatrix}
    e^{u}&0 \\
    -\delta(u)&e^{-u}
  \end{pmatrix},
\end{align*}
where
\begin{equation*}
  \delta(u)
  :=
  \frac{1}{2}
  \left(
    e^{2u}+e^{-2u}-3+\sqrt{(e^{2u}+e^{-2u}+1)(e^{2u}+e^{-2u}-3)}
  \right).
\end{equation*}
Note that from \cite{Riley:QUAJM31984}, any non-Abelian representation $\pi_1(\FigEight)\to\SL(2;\C)$ is given as above.
The longitude $\lambda_{E}$ of $E$ is sent to
\begin{equation*}
  \rho_{u}(\lambda_E)
  =
  \begin{pmatrix}
    \ell(u) &(e^u+e^{-u})\sqrt{(e^{2u}+e^{-2u}+1)(e^{2u}+e^{-2u}-3)} \\
    0       &\ell(u)^{-1}
  \end{pmatrix},
\end{equation*}
where
\begin{multline*}
  \ell(u)
  \\:=
  \frac{1}{2}(e^{4u}-e^{2u}-2-e^{-2u}+e^{-4u})
  +
  \frac{e^{2u}-e^{-2u}}{2}\sqrt{(e^{2u}+e^{-2u}+1)(e^{2u}+e^{-2u}-3)}.
\end{multline*}
\par
We extend it to $\pi_1(\FigEightCable)$ so that its restriction to $\pi_1(\Cable)$ is Abelian.
We put
\begin{equation*}
  \rho_{u}(p)
  =
  \begin{pmatrix}
    e^{u/2}&\frac{1}{2\cosh(u/2)} \\
    0      &e^{-u/2}
  \end{pmatrix}
\end{equation*}
so that $\rho_{u}(p)^2=\rho_{u}(x)$.
Since
\begin{equation*}
  \rho_{u}(x)^b
  =
  \begin{pmatrix}
    e^{bu}&\frac{e^{bu}-e^{-bu}}{e^{u}-e^{-u}} \\
    0     &e^{-bu}
  \end{pmatrix},
\end{equation*}
we have
\begin{equation*}
\begin{split}
  \rho_{u}(r)
  &=
  \rho_{u}(\lambda_{E})\rho_{u}(x)^b
  \\
  &=
  \begin{pmatrix}
    \ell(u)e^{bu}&
    e^{bu}(e^u+e^{-u})\sqrt{(e^{2u}+e^{-2u}+1)(e^{2u}
     +e^{-2u}-3)}+\ell(u)^{-1}\frac{e^{bu}-e^{-bu}}{e^{u}-e^{-u}}
    \\
    0&\ell(u)^{-1}e^{-bu}
  \end{pmatrix}.
\end{split}
\end{equation*}
We can see that $\rho_{u}(p)$ and $\rho_{u}(r)$ commute and so $\rho_{u}$ is well-defined.
\begin{rem}
Note that $\delta(\kappa)=0$ since $e^{2\kappa}=\frac{3+\sqrt{5}}{2}$, which is a zero of $t-3+t^{-1}$.
Note also that $e^{\kappa}$ is a zero of the Alexander polynomial of $E^{(2,2b+1)}$ that is given as
\begin{equation*}
  \Delta(E^{(2,2b+1)};t)
  =
  (-t^2+3-t^{-2})\times\frac{t^{(2b+1)/2}+t^{-(2b+1)/2}}{t^{1/2}+t^{-1/2}}.
\end{equation*}
\end{rem}
The preferred longitude $\lambda$ of $E^{(2,2b+1)}$ goes twice along $E$.
Since $\rho_{u}\bigm|_{\pi_1(\Cable)}$ is Abelian, it is sent to
\begin{equation*}
  \rho_{u}(\lambda)
  =
  \rho_{u}(\lambda_E)^2
  =
  \begin{pmatrix}
    \ell(u)^2&\ast \\
    0        &\ell(u)^{-2}
  \end{pmatrix}.
\end{equation*}
\begin{rem}
The function $\ell(u)$ satisfies the following equation:
\begin{equation}\label{eq:A_FigEight}
  \ell(u)-(e^{4u}-e^{2u}-2-e^{-2u}+e^{-4u})+\ell(u)^{-1}=0,
\end{equation}
and so $\ell(u)^2$ satisfies
\begin{equation}\label{eq:A_FigEightCable}
  \ell(u)^2+\ell(u)^{-2}
  =
  \left(e^{4u}-e^{2u}-2-e^{-2u}+e^{-4u}\right)^2-2.
\end{equation}
Compare them with the A-polynomial of the figure-eight knot (\cite[Appendix]{Cooper/Culler/Gillet/Long/Shalen:INVEM1994})
\begin{equation*}
  \mathfrak{l}
  -
  (\mathfrak{m}^4-\mathfrak{m}^2-2-\mathfrak{m}^{-2}+\mathfrak{m}^{-4})
  +
  \mathfrak{l}^{-1}
\end{equation*}
and the A-polynomial of $E^{(2,2b+1)}$ (\cite[Example~2.11]{Ni/Zhang:AGT2017}) given by
\begin{equation*}
  \left(1+\mathfrak{m}^{2(2b+1)}\mathfrak{l}\right)
  \left(
    \mathfrak{l}
    -
    \left(
      \mathfrak{m}^8-\mathfrak{m}^4-2-\mathfrak{m}^{-4}+\mathfrak{m}^{-8}
    \right)^2-2
    +
    \mathfrak{l}^{-1}
  \right).
\end{equation*}
\par
Note that the meridian of $E$ ($E^{(2,2b+1)}$, respectively) is given by $x=p^{2}$ ($p$, respectively).
\end{rem}
\begin{rem}
If we use the hyperbolic functions, we have the following expressions for $\ell(u)$.
\begin{align*}
  \ell(u)
  &=
  \cosh(4u)-\cosh(2u)-1+\sinh(2u)\sqrt{(2\cosh(2u)+1)(2\cosh(2u)-3)}
  \\
  &=
  2\cosh^2(2u)-\cosh(2u)-2+\sinh(2u)\sqrt{(2\cosh(2u)+1)(2\cosh(2u)-3)}
  \\
  &=
  \cosh(4u)-\cosh(2u)-1+2\sinh(2u)\sinh\varphi(u).
\end{align*}
Here the last equality follows since $\cosh\varphi(u)=\cosh(2u)-\frac{1}{2}$.
\end{rem}
Since $\cosh(2\kappa)=\frac{3}{2}$ by definition, we have the following equality:
\begin{equation*}
  \ell(\kappa)=1.
\end{equation*}
For $u$ greater than $\kappa$ we have the following lemma.
\begin{lem}\label{lem:ell_real}
If $u>\kappa$, then $\ell(u)$ is a real number with $\ell(u)>1$.
\end{lem}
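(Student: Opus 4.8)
The plan is to read off both assertions directly from the hyperbolic-function expression
\[
  \ell(u)=\cosh(4u)-\cosh(2u)-1+2\sinh(2u)\sinh\varphi(u)
\]
recorded in the Remark above, in which $\cosh\varphi(u)=\cosh(2u)-\frac12$.

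First I would dispose of reality. For $u>\kappa$ we have $\cosh(2u)>\cosh(2\kappa)=\frac32$, hence $\cosh\varphi(u)=\cosh(2u)-\frac12>1$, so $\varphi(u)$ is a well-defined positive real number and $\ell(u)$ is real.

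For the inequality $\ell(u)>1$ I would write $\ell(u)-1$ as the sum of the two pieces $\bigl(\cosh(4u)-\cosh(2u)-2\bigr)$ and $2\sinh(2u)\sinh\varphi(u)$ and check that each is positive when $u>\kappa$. The second is immediate, since $\sinh(2u)>0$ and $\sinh\varphi(u)>0$ (because $\varphi(u)>0$). For the first, the one observation that is not completely automatic is the factorization
\[
  \cosh(4u)-\cosh(2u)-2=2\cosh^2(2u)-\cosh(2u)-3=\bigl(2\cosh(2u)-3\bigr)\bigl(\cosh(2u)+1\bigr),
\]
obtained from $\cosh(4u)=2\cosh^2(2u)-1$; since $\cosh(2u)>\frac32$ for $u>\kappa$, both factors are positive. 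Adding the two positive terms gives $\ell(u)>1$.

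I do not expect a serious obstacle here: once the expression for $\ell(u)$ from the preceding Remark is available, the argument is an elementary sign estimate, with the factorization above as the only point requiring a moment's thought. (One could instead use the relation $\ell(u)+\ell(u)^{-1}=2\cosh(4u)-2\cosh(2u)-2$, whose right-hand side exceeds $2$ for $u>\kappa$, together with continuity of $\ell$ on $(\kappa,\infty)$, the fact that $\ell$ never takes the value $1$ there, and $\ell(\kappa)=1$, to conclude that $\ell(u)$ lies on the branch $>1$; but the direct estimate is shorter and avoids a connectedness argument.)
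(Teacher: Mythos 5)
Your argument is correct and is essentially the paper's own: both proofs note that $\cosh(2u)>\tfrac32$ for $u>\kappa$ makes the square-root (equivalently $\sinh\varphi(u)$) term real and positive, discard it, and verify that the remaining polynomial part $2\cosh^2(2u)-\cosh(2u)-2=\cosh(4u)-\cosh(2u)-1$ exceeds $1$; your factorization $(2\cosh(2u)-3)(\cosh(2u)+1)$ just makes that last elementary estimate explicit where the paper states it directly.
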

\begin{proof}
Since $\cosh(2u)>\frac{3}{2}$ when $u>\kappa$, we see that $\ell(u)$ is real.
We also see that
\begin{equation*}
  \ell(u)
  >
  2\cosh^2(2u)-\cosh(2u)-2
  >
  1.
\end{equation*}
\end{proof}
\begin{rem}
If $u$ is real and $u>\kappa$, then the representation $\rho_{u}$ is to $\SL(2;\R)$.
\end{rem}
%%%%%%%%%%%%%%%%%%%%%%%%%%%%%%%%%%%%%%%%%%%%%%%%%%%%%%%%%%%%%%%%%%%%%%%%%%%%%%%%
\section{Chern--Simons invariant}\label{sec:CS}
We will calculate the $\PSL(2;\C)$ Chern--Simons invariant of $\FigEightCable$ associated with the representation $\rho_{u}$.
\par
A practical definition of the Chern--Simons invariant is as follows.
See \cite{Kirk/Klassen:COMMP1993} for the precise definition.
Let $M$ be a three-manifold with boundary $\partial{M}$ a torus.
\par
The $\PSL(2;\C)$ character variety $X(\partial{M})$ of $\pi_1(\partial{M})$ is the quotient set of $\Hom(\pi_1(\partial),\PSL(2;\C))$, where two representations are equivalent if they share the same trace.
It can be described as follows.
Let $\{\mu,\lambda\}$ be a generator set of $\pi_1(\partial{M})\cong\Z^2$ that are presented by oriented simple closed curves on $\partial{M}$.
For a $\PSL(2;\C)$ representation $\gamma$, we can assume
\begin{align*}
  \gamma(\mu)
  &=
  \pm
  \begin{pmatrix}
    e^{2\pi\i g(\gamma)}&\ast \\
    0                   &e^{-2\pi\i g(\gamma)}
  \end{pmatrix},
  \\
  \gamma(\lambda)
  &=
  \pm
  \begin{pmatrix}
    e^{2\pi\i h(\gamma)}&\ast \\
    0                   &e^{-2\pi\i h(\gamma)}
  \end{pmatrix}
\end{align*}
after suitable conjugation.
Then the map $\gamma\mapsto(g(\gamma),h(\gamma))$ gives a one-to-one correspondence between $X(\partial{M})$ and the quotient space $\C^2/H$, where the group $H$ is given as
\begin{equation*}
  H
  :=
  \langle X,Y,b\mid [X,Y]=XbXb=YbYb=b^2=1\rangle,
\end{equation*}
and it acts on $\C^2$ by
\begin{align*}
  X\cdot(x,y)
  &:=
  \left(x+\frac{1}{2},y\right),
  \\
  Y\cdot(x,y)
  &:=
  \left(x,y+\frac{1}{2}\right),
  \\
  b\cdot(x,y)
  &=
  (-x,-y).
\end{align*}
\par
We define $E(\partial{M})$ to be the quotient space of $\C^2\times\C^{\times}$ by the group $H$, where $H$ acts on $E(\partial{M})$ by
\begin{align*}
  X\cdot(x,y;z)
  &:=
  \left(x+\frac{1}{2},y;z\times e^{-4y\pi\i}\right),
  \\
  Y\cdot(x,y;z)
  &:=
  \left(x,y+\frac{1}{2};z\times e^{4x\pi\i}\right),
  \\
  b\cdot(x,y;z)
  &:=
  (-x,-y;z).
\end{align*}
We denote by $[x,y;z]\in E(\partial{M})$ the equivalence class of $(x,y,z)$, that is,
\begin{equation}\label{eq:equiv}
  \begin{cases}
    [x,y;z]
    &=
    \left[x+\frac{1}{2},y;z\times e^{-4y\pi\i}\right],
    \\[3mm]
    [x,y;z]
    &=
    \left[x,y+\frac{1}{2};z\times e^{4x\pi\i}\right],
    \\[3mm]
    [x,y;z]
    &=
    [-x,-y;z].
  \end{cases}
\end{equation}
Then we can see that $q\colon E(\partial{M})\to X(\partial{M})$ ($q\colon[x,y;z]\mapsto[x,y]$) is a $\C^{\times}$-bundle.
\par
Now the Chern--Simons function $\cs_{M}$ is a map from the $\PSL(2,\C)$ character variety $X(M)$ of $\pi_1(M)$ to $E(\partial{M})$ such that the following diagram commutes:
\begin{equation*}
\begin{tikzcd}
  &E(\partial{M})\arrow[d,"q"]
  \\
  X(M)\arrow[ru,"\cs_M"]\arrow[r,"r"]&X(\partial{M}).
\end{tikzcd}
\end{equation*}
Here $r\colon X(M)\to X(\partial{M})$ is the restriction map.
\begin{defn}
If $\cs_{M}([\gamma])=\left[\frac{u}{4\pi\i},\frac{v}{4\pi\i};z\right]$, we put $\CS_{M}([\gamma];u,v):=\frac{\pi\i}{2}\log{z}\pmod{\pi^2\Z}$ and call it the Chern--Simons invariant of $M$ associated with $[\gamma]$ and the representative $\left(\frac{u}{4\pi\i},\frac{v}{4\pi\i}\right)$ of $X(\partial{M})$, where we denote by $[\gamma]\in X(M)$ the equivalence class of the representation $\gamma$.
\end{defn}
To calculate the Chern--Simons invariant, we use the following theorem of Kirk and Klassen \cite{Kirk/Klassen:COMMP1993}.
\begin{thm}[Kirk--Klassen]\label{thm:KK}
Let $M$ a three-manifold with boundary $\partial M$ a torus.
Let $\gamma_t\colon\pi_1(M)\to\PSL(2;\C)$ be a smooth path of representations such that
\begin{align*}
  \gamma_t(\mu_M)
  &=
  \begin{pmatrix}
    e^{u(t)/2}& \ast \\
    0         &e^{-u(t)/2}
  \end{pmatrix},
  \\
  \gamma_t(\lambda_M)
  &=
  \begin{pmatrix}
    e^{v(t)/2}&\ast \\
    0         &e^{-v(t)/2}
  \end{pmatrix}
\end{align*}
up to conjugation, where $\mu_M,\lambda_M\in\pi_1(\partial{M})$ are the meridian and the longitude of $\partial{M}$.
\par
If $\cs_M([\rho_t])=\left[\frac{u(t)}{4\pi\i},\frac{v(t)}{4\pi\i};z(t)\right]$, then we have
\begin{equation*}
  \frac{z(1)}{z(0)}
  =
  \exp
  \left(
    \frac{\i}{2\pi}
    \int_{0}^{1}
    \bigl(
      u(t)\frac{d\,v(t)}{d\,t}-\frac{d\,u(t)}{d\,t}v(t)
    \bigr)\,dt
  \right).
\end{equation*}
\end{thm}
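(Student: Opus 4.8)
The plan is to realize $\CS_M$, and hence the lift $z(t)$, through the differential-geometric Chern--Simons functional on flat connections, and then to compute its first variation along the path $\gamma_t$. Each representation $\gamma_t$ is the holonomy of a flat $\SL(2;\C)$-connection $A_t$ on $M$, and a smooth path of representations lifts to a smooth path of flat connections; I would fix normalizations so that
\[
  \CS_M([\gamma_t])=\Phi(A_t):=\tfrac12\int_M\Tr\!\bigl(A_t\wedge dA_t+\tfrac23 A_t\wedge A_t\wedge A_t\bigr),
\]
and so that the $\C^{\times}$-coordinate is $z(t)=\exp\!\bigl(\tfrac{2}{\pi\i}\CS_M([\gamma_t])\bigr)$, which is exactly the relation $\CS_M=\frac{\pi\i}{2}\log z$ of the definition. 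With these conventions the whole statement reduces to computing $\frac{d}{dt}\CS_M([\gamma_t])$ and integrating.

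First I would differentiate the functional. For a smooth family $A_t$ a standard integration by parts gives
\[
  \frac{d}{dt}\,\Phi(A_t)=\int_M\Tr(\dot A_t\wedge F_t)-\tfrac12\int_{\partial M}\Tr(A_t\wedge\dot A_t),
\]
where $F_t=dA_t+A_t\wedge A_t$ is the curvature. Because each $A_t$ is flat we have $F_t=0$, so the bulk term vanishes and only the boundary integral over $\partial M=T^2$ survives. This is the conceptual heart: the first variation of the Chern--Simons action at a flat connection localizes entirely on the boundary torus.

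Next I would evaluate that boundary term. Since $\pi_1(T^2)$ is abelian, after a gauge transformation $A_t$ restricts to $\partial M$ as a flat diagonal connection $\mathrm{diag}(\omega_t,-\omega_t)$ with $\omega_t$ a closed $1$-form whose periods are $\oint_{\mu_M}\omega_t=u(t)/2$ and $\oint_{\lambda_M}\omega_t=v(t)/2$, so as to realize the prescribed eigenvalues $e^{\pm u/2}$ and $e^{\pm v/2}$. Then $\Tr(A_t\wedge\dot A_t)=2\,\omega_t\wedge\dot\omega_t$, and representing $\omega_t$ by its de Rham class in the basis of $H^1(T^2)$ dual to $\{\mu_M,\lambda_M\}$ yields $\int_{T^2}\omega_t\wedge\dot\omega_t=\tfrac14(u\dot v-v\dot u)$. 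Combining gives $\frac{d}{dt}\CS_M=-\tfrac14(u\dot v-\dot u v)$, and substituting into $\log z=\tfrac{2}{\pi\i}\CS_M$ (using $1/\i=-\i$) produces $\frac{d}{dt}\log z=\tfrac{\i}{2\pi}(u\dot v-\dot u v)$; integrating over $t\in[0,1]$ gives the asserted ratio $z(1)/z(0)=\exp\!\bigl(\tfrac{\i}{2\pi}\int_0^1(u\dot v-\dot u v)\,dt\bigr)$.

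The main obstacle is bookkeeping rather than geometry. I must verify that the boundary contribution is independent of every choice made along the way---the off-diagonal (possibly nilpotent) parts of $A_t$ when the boundary holonomy degenerates, the gauge diagonalizing $A_t$ on $T^2$, and the particular flat connections chosen to represent the given path of holonomies---so that the result genuinely descends to the $\C^{\times}$-bundle $E(\partial M)$ and is compatible with the $H$-action of \eqref{eq:equiv}; this is precisely why the clean invariant statement is the ratio $z(1)/z(0)$ within a single chart rather than an absolute value of $z$. Equally delicate is pinning down every normalization constant---the $\tfrac12$ in $\Phi$, the $\tfrac{2}{\pi\i}$ relating $z$ to $\CS_M$, and the $\tfrac14$ coming from the cup product on $T^2$---so that they assemble into exactly $\tfrac{\i}{2\pi}$; a stray sign or factor of two here is the easiest way for the argument to go wrong.
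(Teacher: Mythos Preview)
The paper does not prove this theorem at all: it is quoted verbatim from Kirk and Klassen \cite{Kirk/Klassen:COMMP1993} and then applied as a black box to compute $\CS_{\FigEightCable}$. So there is no ``paper's own proof'' to compare your argument against.

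That said, your outline is essentially the argument Kirk and Klassen themselves give. One realizes the path of representations by a path of flat connections, differentiates the Chern--Simons functional, uses flatness to kill the bulk term, and evaluates the remaining boundary integral on the torus via the periods of the diagonalized connection form. The concerns you flag at the end---gauge independence of the boundary contribution, well-definedness on the quotient by $H$, and the precise constants---are exactly the points that require care, and they are what occupies most of the work in \cite{Kirk/Klassen:COMMP1993}; your sketch is honest about this but does not actually carry it out. If you want a self-contained proof you should consult that reference (or the closely related treatment via the symplectic structure on the character variety of the torus), since the present paper simply imports the result.
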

\par
We calculate the Chern--Simons invariant of $\FigEightCable$ by using Theorem~\ref{thm:KK}.
\par
Putting $u_t:=(u-\kappa)t+\kappa$, we define two paths of representations $\alpha_t$ and $\beta_t$ ($0\le t\le1$) as follows.
\begin{align*}
  \alpha_t
  &\colon
  p\mapsto
  \begin{pmatrix}
    e^{t\kappa/2}&0 \\
    0            &e^{-t\kappa/2}
  \end{pmatrix},
  \quad
  x\mapsto
  \begin{pmatrix}
    e^{t\kappa}&0 \\
    0          &e^{-t\kappa}
  \end{pmatrix},
  \quad
  y\mapsto
  \begin{pmatrix}
    e^{t\kappa}&0 \\
    0     &e^{-t\kappa}
  \end{pmatrix},
  \\
  \beta_t
  &\colon
  p\mapsto
  \begin{pmatrix}
    e^{u_t/2}&\frac{1}{2\cosh(u_t/2)} \\
    0        &e^{-u_t/2}
  \end{pmatrix},
  x\mapsto
  \begin{pmatrix}
    e^{u_t}&1 \\
    0      &e^{-u_t}
  \end{pmatrix},
  \quad
  y\mapsto
  \begin{pmatrix}
    e^{u_t}&0 \\
    -d(u_t)&e^{-u_t}
  \end{pmatrix}.
\end{align*}
Note that $\alpha_1$ and $\beta_0$ share the same trace and that $\beta_0$ is upper-triangular.
\par
We can write
\begin{align*}
  \cs_{\FigEightCable}([\alpha_t])
  &:=
  \left[
    \frac{t\kappa}{4\pi\i},0;z(t)
  \right],
  \\\cs_{\FigEightCable}([\beta_t])
  &:=
  \left[
    \frac{u_t}{4\pi\i},\frac{4\log\ell(u_t)}{4\pi\i};w(t)
  \right],
\end{align*}
since
\begin{align*}
  \alpha_t(\lambda)
  &=
  \begin{pmatrix}1&0\\0&1\end{pmatrix},
  \\
  \beta_t(\lambda)
  &=
  \begin{pmatrix}
    \ell(u_t)^2&\ast \\
    0          &\ell(u_t)^{-2}
  \end{pmatrix}.
\end{align*}
Note that since $\ell(u_t)>1$ (Lemma~\ref{lem:ell_real}), $\log\ell(u_t)>0$.
Therefore we have $z(1)=z(0)=1$ and
\begin{equation*}
\begin{split}
  &\frac{w(1)}{w(0)}
  \\
  =&
  \exp
  \left(
    \frac{\i}{2\pi}
    \int_{0}^{1}
    \left(
      u_t\times\frac{4d\,\log\ell(u_t)}{d\,t}
      -
      4\log\ell(u_t)\times\frac{d\,u_t}{d\,t}
    \right)\,dt
  \right)
  \\
  =&
  \exp
  \left(
    \frac{2\i}{\pi}
    \left(
      \Bigl[u_t\log\ell(u_t)\Bigr]_{0}^{1}
      -
      2(u-\kappa)\int_{0}^{1}\log\ell(u_t)\,dt
    \right)
  \right)
  \\
  &\quad\text{(putting $s:=u_t$)}
  \\
  =&
  \exp
  \left(
    \frac{2\i}{\pi}
    \left(
      u\log\ell(u)
      -
      2\int_{\kappa}^{u}\log\ell(s)\,ds
    \right)
  \right).
\end{split}
\end{equation*}
Because $\alpha_1$ and $\beta_0$ share the same trace, $\cs_{\FigEightCable}([\alpha_1])=\cs_{\FigEightCable}([\beta_0])$.
Since
\begin{align*}
  \cs_{\FigEightCable}([\alpha_1])
  &=
  \left[
    \frac{\kappa}{4\pi\i},0;1
  \right],
  \\
  \intertext{and}
  \cs_{\FigEightCable}(\beta_0)
  &=
  \left[
    \frac{\kappa}{4\pi\i},\frac{4\log\ell(\kappa)}{4\pi\i};w(0)
  \right],
\end{align*}
we have
\begin{equation*}
  \left[
    \frac{\kappa}{4\pi\i},0;1
  \right]
  =
  \left[
    \frac{\kappa}{4\pi\i},\frac{4\log\ell(\kappa)}{4\pi\i};w(0)
  \right]
  =
  \left[
    \frac{\kappa}{4\pi\i},0;w(0)
  \right]
\end{equation*}
since $\ell(\kappa)=1$.
So we have $w(0)=1$.
\par
Therefore we have
\begin{equation*}
  w(1)
  =
  \exp
  \left(
    \frac{2\i}{\pi}
    \left(
      u\log\ell(u)
      -
      2\int_{\kappa}^{u}\log\ell(s)\,ds
    \right)
  \right)
\end{equation*}
and so
\begin{equation*}
  \CS_{\FigEightCable}\bigl([\rho_{u}];u,v(u)\bigr)
  =
  2\int_{\kappa}^{u}\log\ell(s)\,ds
  -u\log\ell(u),
\end{equation*}
if we put $v(u):=4\log\ell(u)$.
\par
Now we put
\begin{equation*}
  S(u)
  :=
  \Li_2(e^{-\varphi(u)-2u})-\Li_2(e^{\varphi(u)-2u})+2u\varphi(u).
\end{equation*}
We will show that $\CS_{\FigEightCable}\bigl([\rho_{u}];u,v(u)\bigr)=S(u)-u\log\ell(u)$, that is,
\begin{equation}\label{eq:S_ell}
  S(u)=2\int_{\kappa}^{u}\log\ell(s)\,ds.
\end{equation}
\par
Since $e^{\varphi(u)}+e^{-\varphi(u)}=e^{2u}+e^{-2u}-1=2\cosh(2u)-1$, we have
\begin{equation*}
\begin{split}
  \ell(u)
  &=
  \frac{1}{2}
  \left(
    (e^{2u}+e^{-2u})^2-(e^{2u}+e^{-2u})-4
  \right)
  +
  \sinh(2u)\sqrt{(2\cosh(2u)+1)(2\cosh(2u)-3)}
  \\
  &=
  \cosh(4u)-\cosh(2u)-1+\sin(2u)\sqrt{(2\cosh(2u)+1)(2\cosh(2u)-3)}.
\end{split}
\end{equation*}
Therefore we have
\begin{equation}\label{eq:S1}
\begin{split}
  &\exp\left(\frac{d}{d\,u}\left(2\int_{\kappa}^{u}\log\ell(s)\,ds\right)\right)
  \\
  =&
  \left(
    \cosh(4u)-\cosh(2u)-1+\sin(2u)\sqrt{(2\cosh(2u)+1)(2\cosh(2u)-3)}
  \right)^2.
\end{split}
\end{equation}
\par
On the other hand, we have
\begin{equation*}
\begin{split}
  &\frac{d\,S(u)}{d\,u}
  \\
  =&
  \frac{d}{d\,u}\Li_2(e^{-\varphi(u)-2u})
  -
  \frac{d}{d\,u}\Li_2(e^{\varphi(u)-2u})
  +
  2\varphi(u)+2u\varphi'(u)
  \\
  =&
  -
  \frac{\log(1-e^{-\varphi(u)-2u})}
       {e^{-\varphi(u)-2u}}\times(-\varphi'(u)-2)e^{-\varphi(u)-2u}
  \\
  &+
  \frac{\log(1-e^{-\varphi(u)-2u})}
       {e^{\varphi(u)-2u}}\times(\varphi'(u)-2)e^{\varphi(u)-2u}
  +
  2\varphi(u)+2u\varphi'(u)
  \\
  =&
  -
  (-\varphi'(u)-2)\log(1-e^{-\varphi(u)-2u})
  +
  (\varphi'(u)-2)\log(1-e^{\varphi(u)-2u})
  +
  2\varphi(u)+2u\varphi'(u)
  \\
  =&
  \varphi'(u)\log(1-e^{-\varphi(u)-2u})(1-e^{\varphi(u)-2u})
  +
  2\log\frac{1-e^{-\varphi(u)-2u}}{1-e^{\varphi(u)-2u}}
  +
  2\varphi(u)+2u\varphi'(u)
  \\
  =&
  \varphi'(u)\log(1-e^{-\varphi(u)-2u}-e^{\varphi(u)-2u}+e^{-4u})
  +
  2\log\frac{1-e^{-\varphi(u)-2u}}{1-e^{\varphi(u)-2u}}
  +
  2\varphi(u)
  +
  2u\varphi'(u).
\end{split}
\end{equation*}
Since
\begin{equation*}
  1-e^{-\varphi(u)-2u}-e^{\varphi(u)-2u}+e^{-4u}
  =
  e^{-2u}
  \left(
    e^{2u}+e^{-2u}-e^{-\varphi(u)}-e^{\varphi(u)}
  \right)
  =
  e^{-2u},
\end{equation*}
we have
\begin{equation*}
  \frac{d\,S(u)}{d\,u}
  =
  2\log\frac{1-e^{-\varphi(u)-2u}}{1-e^{\varphi(u)-2u}}
  +
  2\varphi(u)
  =
  2\log\frac{e^{\varphi(u)}-e^{-2u}}{1-e^{\varphi(u)-2u}}.
\end{equation*}
Now we calculate
\begin{equation*}
\begin{split}
  &
  \frac{e^{\varphi(u)}-e^{-2u}}{1-e^{\varphi(u)-2u}}
  \\
  =&
  \frac{\left(e^{\varphi(u)}-e^{-2u}\right)\left(e^{\varphi(u)+2u}-1\right)}
       {\left(1-e^{\varphi(u)-2u}\right)\left(e^{\varphi(u)+2u}-1\right)}
  \\
  =&
  \frac{e^{2\varphi(u)+2u}+e^{-2u}-2e^{\varphi(u)}}
       {e^{\varphi(u)+2u}+e^{\varphi(u)-2u}-e^{2\varphi(u)}-1}
  \\
  =&
  \frac{e^{\varphi(u)+2u}+e^{-\varphi(u)-2u}-2}
       {e^{2u}+e^{-2u}-e^{\varphi(u)}-e^{-\varphi(u)}}
  \\
  =&
  e^{\varphi(u)+2u}+e^{-\varphi(u)-2u}-2.
  \\
  =&
  \frac{1}{2}e^{4u}+\frac{1}{2}-\frac{1}{2}e^{2u}
  +
  \frac{1}{2}e^{2u}\sqrt{(2\cosh(2u)+1)(2\cosh(2u)-3)}
  \\
  &+
  \frac{1}{2}+\frac{1}{2}e^{-4u}-\frac{1}{2}e^{-2u}
  -
  \frac{1}{2}e^{-2u}\sqrt{(2\cosh(2u)+1)(2\cosh(2u)-3)}
  -2
  \\
  =&
  \cosh(4u)-\cosh(2u)-1+\sinh(2u)\sqrt{(2\cosh(2u)+1)(2\cosh(2u)-3)}.
\end{split}
\end{equation*}
Here we use the following equalities:
\begin{align*}
  e^{\varphi(u)}
  &=
  \frac{1}{2}
  \left(
    e^{2u}+e^{-2u}-1+\sqrt{(2\cosh(2u)+1)(2\cosh(2u)-3)}
  \right),
  \\
  e^{-\varphi(u)}
  &=
  \frac{1}{2}
  \left(
    e^{2u}+e^{-2u}-1-\sqrt{(2\cosh(2u)+1)(2\cosh(2u)-3)}
  \right).
\end{align*}
Therefore we conclude
\begin{equation*}
\begin{split}
  &\exp\left(\frac{d\,S(u)}{d\,u}\right)
  \\
  =&
  \left(
    \cosh(4u)-\cosh(2u)-1+\sinh(2u)\sqrt{(2\cosh(2u)+1)(2\cosh(2u)-3)}
  \right)^2.
\end{split}
\end{equation*}
\par
From \eqref{eq:S1}, we have
\begin{equation*}
  \exp\left(\frac{d\,S(u)}{d\,u}\right)
  =
  \exp\left(\frac{d}{d\,u}\left(2\int_{\kappa}^{u}\log\ell(s)\,ds\right)\right).
\end{equation*}
Since $S(\kappa)=0$, \eqref{eq:S_ell} follows.
\par
Thus, we obtain the following theorem.
\begin{thm}\label{thm:main}
Let $\xi$ be a positive real number with $\xi>\frac{1}{2}\arccosh\left(\frac{3}{2}\right)$, then we have
\begin{equation*}
  \xi\lim_{N\to\infty}
  \frac{\log J_{N}\left(E^{(2,2b+1)};e^{\xi/N}\right)}{N}
  =
  S(\xi),
\end{equation*}
where we put
\begin{equation*}
  S(\xi)
  :=
  \Li_2\left(e^{-\varphi(\xi)-2\xi}\right)
  -
  \Li_2\left(e^{\varphi(\xi)-2\xi}\right)
  +2\xi\varphi(\xi)
\end{equation*}
with
\begin{equation*}
  \varphi(\xi)
  :=
  \arccosh\left(\cosh(2\xi)-\frac{1}{2}\right).
\end{equation*}
We have also shown that $S(\xi)$ defines the Chern--Simons invariant $\CS_{\FigEightCable}([\rho_{\xi}];\xi,v(\xi))$ of the knot exterior $\FigEightCable=S^3\setminus\Int{N(E^{(2,2b+1)})}$ associated with $\rho_{\xi}$ and $(\xi,v(\xi))$ defined in Section~\ref{sec:rep} in the following way:
\begin{equation*}
  \CS_{\FigEightCable}([\rho_{\xi}];\xi,v(\xi))
  =
  S(\xi)-\frac{\xi v(\xi)}{4}.
\end{equation*}
Here the meridian $p\in\pi_1(\FigEightCable)$ is sent to $\begin{pmatrix}e^{\xi/2}&1\\0&e^{-\xi/2}\end{pmatrix}$ and the longitude $\lambda\in\pi_1(\FigEightCable)$ is sent to $\begin{pmatrix}-e^{v(\xi)/2}&\ast\\0&-e^{-v(\xi)/2}\end{pmatrix}$.
\end{thm}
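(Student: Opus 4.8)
The plan is to assemble the two assertions from the material already established, so the remaining work is essentially bookkeeping.

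For the limiting value, I would start from the asymptotic evaluation carried out after Proposition~\ref{prop:f_max}: using Lemma~\ref{lem:S} together with the identification of the maximizing term $f_{N-1,\floor{\varphi(\xi)N/\xi}-1}$ and the squeeze theorem, one obtains, for every real $\xi>\kappa$,
\[
  \lim_{N\to\infty}\frac{\log J_N\left(E^{(2,2b+1)};e^{\xi/N}\right)}{N}
  =
  \frac{1}{\xi}\Li_2\left(e^{-\varphi(\xi)-2\xi}\right)
  -\frac{1}{\xi}\Li_2\left(e^{\varphi(\xi)-2\xi}\right)+2\varphi(\xi).
\]
Multiplying through by $\xi$ and rewriting the right-hand side as $\Li_2(e^{-\varphi(\xi)-2\xi})-\Li_2(e^{\varphi(\xi)-2\xi})+2\xi\varphi(\xi)=S(\xi)$ gives the first displayed equality; that is the only step needed for this half.

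For the Chern--Simons half I would proceed as follows. First, recall from Section~\ref{sec:rep} that $\rho_\xi$ sends the preferred longitude $\lambda$ to $\rho_\xi(\lambda_E)^2$, whose diagonal entries are $\ell(\xi)^{\pm2}$; with $v(\xi):=4\log\ell(\xi)$ the diagonal parts of the meridian and longitude images are then as in the statement (and $\rho_\xi$ lands in $\SL(2;\R)$ with $\ell(\xi)>1$ by Lemma~\ref{lem:ell_real}). Next, apply the Kirk--Klassen formula (Theorem~\ref{thm:KK}) along the concatenation of the abelian path $\alpha_t$ and the non-abelian path $\beta_t$: the endpoints $\alpha_1$ and $\beta_0$ share a trace, the abelian leg contributes nothing because $\alpha_t(\lambda)$ is trivial, and $\ell(\kappa)=1$ forces the multiplicative constant $w(0)=1$. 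This yields
\[
  \CS_{\FigEightCable}\bigl([\rho_\xi];\xi,v(\xi)\bigr)
  =
  2\int_{\kappa}^{\xi}\log\ell(s)\,ds-\xi\log\ell(\xi).
\]
Finally I would invoke the identity \eqref{eq:S_ell}, that is $S(u)=2\int_{\kappa}^{u}\log\ell(s)\,ds$, obtained by checking that $\exp(S'(u))$ and $\exp\bigl(\frac{d}{du}2\int_{\kappa}^{u}\log\ell(s)\,ds\bigr)$ coincide --- both equalling $\bigl(\cosh4u-\cosh2u-1+\sinh2u\sqrt{(2\cosh2u+1)(2\cosh2u-3)}\bigr)^2$ --- together with $S(\kappa)=0$. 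Substituting $S(\xi)$ for $2\int_{\kappa}^{\xi}\log\ell(s)\,ds$ and $\xi v(\xi)/4$ for $\xi\log\ell(\xi)$ gives $\CS_{\FigEightCable}([\rho_\xi];\xi,v(\xi))=S(\xi)-\xi v(\xi)/4$, which is the assertion.

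The genuine difficulty lies not in this final assembly but in the two inputs it rests on: on the analytic side, pinning down the precise maximizing index $\floor{\varphi(\xi)N/\xi}-1$ of the family $\{f_{N-1,l}\}$ (Proposition~\ref{prop:f_max}) so that the squeeze theorem applies and then recognizing the resulting Riemann sums as the dilogarithm combination; and on the geometric side, choosing the base point of the path of representations at $u=\kappa$ --- where $\delta(\kappa)=0$ and $\ell(\kappa)=1$ --- so that the Kirk--Klassen line integral can be evaluated explicitly and its constant determined. Within the present step, the one point requiring real verification is the differential identity \eqref{eq:S_ell}: the chain-rule computation of $S'(u)$, which uses $\cosh\varphi(u)=\cosh(2u)-\tfrac12$ in the guise of $1-e^{-\varphi(u)-2u}-e^{\varphi(u)-2u}+e^{-4u}=e^{-2u}$, and the matching of the two antiderivatives at $u=\kappa$.
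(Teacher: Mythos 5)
Your proposal is correct and follows essentially the same route as the paper: the limit is exactly the squeeze-theorem/Riemann-sum computation built on Lemma~\ref{lem:S} and Proposition~\ref{prop:f_max}, and the Chern--Simons identity is obtained, as in Section~\ref{sec:CS}, by applying Theorem~\ref{thm:KK} along the paths $\alpha_t$ and $\beta_t$ (with $w(0)=1$ from $\ell(\kappa)=1$) and then verifying $S(u)=2\int_{\kappa}^{u}\log\ell(s)\,ds$ by differentiation together with $S(\kappa)=0$. No substantive differences from the paper's argument.
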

%%%%%%%%%%%%%%%%%%%%%%%%%%%%%%%%%%%%%%%%%%%%%%%%%%%%%%%%%%%%%%%%%%%%%
Compare this with the result about the figure-eight knot.
In \cite[Theorem~8.1]{Murakami:KYUMJ2004} and \cite[Theorem~6.9]{Murakami:Novosibirsk}, the first author proved the following theorem.
\begin{thm}[\cite{Murakami:KYUMJ2004}]\label{thm:fig8}
Let $\eta$ be a real number with $\eta>2\kappa$, then we have
\begin{equation*}
  \eta\lim_{N\to\infty}\frac{J_N(E;e^{\eta/N})}{N}
  =
  S(\eta/2).
\end{equation*}
\end{thm}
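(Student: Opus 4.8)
The plan is to repeat the argument of Section~3, but with the Habiro--L{\^e} formula applied directly to $J_N(E;t)$ rather than to the summands $J_{2d+1}(E;t)$ appearing in Proposition~\ref{prop:cJ_t}. Since no alternating sign occurs this time, the analysis is in fact lighter than in the cabled case, and the bulk of the work reduces to the same calculus computation already carried out there.

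First I would specialize the Habiro--L{\^e} formula at $t=e^{\eta/N}$ to write
\[
  J_N(E;e^{\eta/N})=\sum_{l=0}^{N-1}g_l,
  \qquad
  g_l:=\prod_{k=1}^{l}\left(2\cosh\eta-2\cosh\!\left(\tfrac{k\eta}{N}\right)\right),
\]
using $\bigl(t^{(N+k)/2}-t^{-(N+k)/2}\bigr)\bigl(t^{(N-k)/2}-t^{-(N-k)/2}\bigr)=2\cosh\eta-2\cosh(k\eta/N)$. For $1\le k\le l\le N-1$ we have $k\eta/N<\eta$, so every factor, and hence every $g_l$, is positive; in particular there is no cancellation, which is why Lemma~\ref{lem:S} has no analogue here. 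Next I would examine the ratio
\[
  \frac{g_l}{g_{l-1}}=2\cosh\eta-2\cosh\!\left(\tfrac{l\eta}{N}\right),
\]
which exceeds $1$ exactly when $\cosh(l\eta/N)<\cosh\eta-\tfrac12$. This is where the hypothesis $\eta>2\kappa$ enters: it is precisely the condition $\cosh\eta-\tfrac12>1$, so that $\varphi:=\varphi(\eta/2)=\arccosh(\cosh\eta-\tfrac12)$ is real with $0<\varphi<\eta$, and the sequence $(g_l)$ strictly increases while $l<\varphi N/\eta$ and strictly decreases afterwards. Running the $\delta$-argument of Lemma~\ref{lem:f} and Proposition~\ref{prop:f_max} (choose $\delta>0$ with $\cosh\eta-\tfrac12-\delta>1$) then gives that, for all sufficiently large $N$, the maximum of $\{g_l:0\le l\le N-1\}$ is $g_{l_N}$ with $l_N:=\lfloor\varphi N/\eta\rfloor$, up to a bounded shift of the index which is irrelevant for the logarithmic asymptotics.

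Since $J_N(E;e^{\eta/N})$ is a sum of $N$ positive terms, I would sandwich $g_{l_N}\le J_N(E;e^{\eta/N})\le N\,g_{l_N}$, take logarithms, divide by $N$, and use the squeeze theorem to reduce the statement to evaluating $\lim_{N\to\infty}\tfrac1N\log g_{l_N}$. Writing $\tfrac1N\log g_{l_N}=\tfrac1N\sum_{k=1}^{l_N}\log\!\bigl(2\cosh\eta-2\cosh(k\eta/N)\bigr)$ as a Riemann sum --- the integrand is continuous and bounded on $[0,\varphi]$, vanishing at $x=\varphi$ --- this limit equals $\tfrac1\eta\int_0^{\varphi}\log(2\cosh\eta-2\cosh x)\,dx$. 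Using $2\cosh\eta-2\cosh x=4\sinh\tfrac{\eta+x}{2}\sinh\tfrac{\eta-x}{2}$ and the substitutions $w=\tfrac{\eta\pm x}{2}$ turns this into $\tfrac2\eta\int_{\eta/2-\varphi/2}^{\eta/2+\varphi/2}\log(2\sinh w)\,dw$, exactly the integral met in Section~3 (now without the extra $\tfrac{(2b+1)\eta}{2}$ term, which there came from the cabling framing).

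The last step is the dilogarithm computation already performed in Section~3: write $\log(2\sinh w)=w+\log(1-e^{-2w})$ and substitute $y=e^{-2w}$ to recognise $\int\log(1-y)/y\,dy$ in terms of $\Li_2$, giving
\[
  \lim_{N\to\infty}\frac{\log J_N(E;e^{\eta/N})}{N}
  =\varphi(\eta/2)+\frac1\eta\Bigl(\Li_2\!\bigl(e^{-\varphi(\eta/2)-\eta}\bigr)-\Li_2\!\bigl(e^{\varphi(\eta/2)-\eta}\bigr)\Bigr)
  =\frac1\eta S(\eta/2),
\]
which is the asserted formula. The only genuinely delicate point is locating the maximum uniformly in $N$ near $l\approx\varphi N/\eta$, where $g_l/g_{l-1}$ is close to $1$; this is exactly what the $\delta$-trick of Lemma~\ref{lem:f} controls, and since it only moves the maximising index by $O(1)$ it leaves the $\tfrac1N\log$ limit unchanged. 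Everything else is routine.
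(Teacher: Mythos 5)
Your argument is correct and follows essentially the same route as the paper, which does not reprove this theorem but quotes it from \cite{Murakami:KYUMJ2004}: your steps (all terms of the Habiro--L{\^e} sum positive, maximal term located at $l\approx\varphi(\eta/2)N/\eta$, sandwich between the largest term and $N$ times it, then the Riemann-sum and dilogarithm evaluation) are exactly the Section~3 technique specialized to the uncabled case, and they do yield $\frac{1}{\eta}S(\eta/2)$. The only remarks worth adding are that the $\delta$-trick of Lemma~\ref{lem:f} is not actually needed here, since the ratio $g_l/g_{l-1}=2\cosh\eta-2\cosh(l\eta/N)$ carries no $N$-dependent shift and so the monotonicity on either side of $\lfloor\varphi(\eta/2)N/\eta\rfloor$ is immediate, and that the theorem as displayed in the paper omits a $\log$ (it should read $\log J_N(E;e^{\eta/N})$ in the numerator), which you correctly supplied.
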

\par
We can express the Chern--Simons invariant of $\FigEight$ in terms of $S(\eta/2)$ as follows.
\par
If we define $\sigma_{u}:=\rho_{u/2}\bigm|_{\pi_1(\FigEight)}$, then
\begin{align*}
  \sigma_{u}(x)
  &=
  \begin{pmatrix}
    e^{u/2}&1 \\
    0      &e^{-u/2}
  \end{pmatrix},
  \\
  \sigma_{u}(y)
  &=
  \begin{pmatrix}
    e^{u/2}&0 \\
    -d(u/2)&e^{-u/2}
  \end{pmatrix},
\end{align*}
where $x$ and $y$ are generators of $\pi_1(\FigEight)$ given in \eqref{eq:pi1_fig8}.
The longitude $\lambda_E$ of $\FigEight$ is sent to
\begin{equation*}
  \sigma_{u}(\lambda_E)
  =
  \begin{pmatrix}
    \ell(u/2)&\ast \\
    0        &\ell(u/2)^{-1}
  \end{pmatrix}.
\end{equation*}
\par
By a calculation similar to $E^{(2,2b+1)}$, the Chern--Simons function $\cs_{\FigEight}$ is given as
\begin{equation*}
  \cs_{\FigEight}(\sigma_{u})
  =
  \left[
    \frac{u}{4\pi\i},
    \frac{2\log\ell(u/2)}{4\pi\i},
    \exp\left(\frac{2}{\pi\i}\CS([\sigma_{u}];u,2\log\ell(u/2))\right)
  \right]
\end{equation*}
with
\begin{equation*}
\begin{split}
  \CS_{\FigEight}([\sigma_{u}];u,\vE(u))
  &=
  \Li_2(e^{-\varphi(u/2)-u})
  -
  \Li_2(e^{\varphi(u/2)-u})
  +u\varphi(u/2)
  -\frac{1}{4}u\vE(u)
  \\
  &=
  S(u/2)-\frac{u\vE(u)}{4},
\end{split}
\end{equation*}
where $\vE(u):=2\log\ell(u/2)$.
So we have
\begin{equation*}
  \CS_{\FigEight}([\sigma_{\eta}];\eta,\vE(\eta))
  =
  S(\eta/2)-\frac{\eta\vE(\eta)}{4}.
\end{equation*}
\par
Since $\vE(u)=\frac{1}{2}v(u/2)$, we have
\begin{equation*}
  \CS_{\FigEightCable}([\rho_{\xi}];\xi,v(\xi))
  =
  \CS_{\FigEight}
  \left(
    \left[\rho_{\xi}\bigm|_{\pi_1(\FigEight)}\right];2\xi,2\vE(2\xi)
  \right).
\end{equation*}
\begin{comment}
Note that this also follows from the fact that the Chern--Simons function $\cs_{\Cable}$ is given as
\begin{equation}\label{eq:CS_cable}
  \cs_{\Cable}\left(\rho_{u}\Bigm|_{\pi_1(\Cable)}\right)
  =
  \left[
    \frac{u}{4\pi\i},\frac{v(u)}{4\pi\i},\frac{2u}{4\pi\i},\frac{2\vE(2u)}{4\pi\i};1
  \right]
\end{equation}
because $\cs_{\FigEightCable}=\langle\cs_{\FigEight},\cs_{\Cable}\rangle$, where $\langle\cdot,\cdot\rangle$ is the inner product defined in \cite[Theorems~2.1]{Kirk/Klassen:COMMP1993}).
The formula \eqref{eq:CS_cable} can be proved by using a method in Section~\ref{sec:CS}.
\end{comment}
%%%%%%%%%%%%%%%%%%%%%%%%%%%%%%%%%%%%%%%%%%%%%%%%%%%%%%%%%%%%%%%%%%%%%%%%%%%%%%%%
\bibliography{mrabbrev,hitoshi}
\bibliographystyle{amsplain}
\end{document}